\title[Uniqueness of area minimizing surfaces]{Uniqueness of area minimizing surfaces for extreme curves}
\author{Baris Coskunuzer}
\author{Tolga Etg\"u}
\address{Department of Mathematics, Ko\c{c} University, Istanbul 34450 TURKEY }
\email{bcoskunuzer@ku.edu.tr}
\email{tetgu@ku.edu.tr}
\thanks{The first author is partially supported by EU-FP7 Grant IRG-226062, TUBITAK Grant 109T685 and TUBA-GEBIP Award. The second author is partially supported by TUBITAK Grant 110T209.}
\newtheorem*{thm*}{Theorem} \theoremstyle{definition}
\newcommand{\Z}{\mathbb{Z}}
\newcommand{\N}{\mathcal{E}}
\newcommand{\BR}{\mathbb{R}}
\newcommand{\C}{\mathcal{G}}
\newcommand{\U}{\mathcal{U}}
\newcommand{\B}{\mathcal{F}}
\newcommand{\T}{\mathcal{V}}
\newcommand{\wh}{\widehat}
\begin{document}

\newtheorem{thm}{Theorem}[section]
\newtheorem{lem}[thm]{Lemma}
\newtheorem{cor}[thm]{Corollary}
\newtheorem{prop}[thm]{Proposition}

\theoremstyle{remark}
\newtheorem{rmk}[thm]{Remark}
\newtheorem{defn}[thm]{\bf{Definition}}

\begin{abstract}
Let $M$ be a compact, orientable, mean convex $3$-manifold with boundary $\partial M$. We show that the set of all simple closed curves  in $\partial
M$ which bound unique area minimizing disks in $M$ is dense in the space of  simple closed curves  in $\partial M$ which are nullhomotopic in $M$. We
also show that the set of all simple closed curves in $\partial M$ which bound unique absolutely area minimizing surfaces in $M$ is dense in the
space of simple closed curves in $\partial M$ which are  nullhomologous in $M$.

\end{abstract}

\maketitle

\section{Introduction}

The Plateau problem investigates the existence of an area minimizing disk (or surface) for a given curve in a given manifold $M$. Besides the solution of this problem, there have been many
important results on the regularity, embeddedness and the number of solutions as well. In this paper, we focus on the number of solutions and give new uniqueness results.

The main question along this line is if, for a given curve, there is a unique
area minimizing disk or surface in the ambient manifold $M$. The first result about this question came from Rado in early 1930s. He showed that if a
curve can be projected bijectively to a convex plane curve, then it bounds a unique minimal disk \cite{Ra}. Then in the early 1970s, Nitsche proved
uniqueness of minimal disks for boundary curves with total curvature less than $4\pi$ in \cite{Ni}. Then, Tromba \cite{Tr} showed that a generic
curve in $\BR^3$ bounds a unique area minimizing disk. Morgan \cite{Mo} proved a similar result concerning absolutely area minimizing surfaces.
Later, White proved a very strong generic uniqueness result for fixed topological type in any dimension and codimension \cite{Wh1}. In particular, he
showed that a generic $k$-dimensional $C^{j,\alpha}$-submanifold of a Riemannian manifold cannot bound two smooth, minimal $(k+1)$-manifolds of equal
area.

In \cite{Co1}, the first author proved generic uniqueness results for both versions of the Plateau problem under the condition that $H_2(M;\Z)=0$. In
this paper, we generalize these results by removing the assumption on homology. Our techniques are simple and topological. The first main result is
the following:

\vspace{.1in}

\noindent {\bf Theorem 3.1.} Suppose that $M$ is a compact, orientable, mean convex $3$-manifold. Let $\N$ be the set of simple closed curves on the
boundary of $M$ which are nullhomotopic in $M$, and $\U \subset \N$ consist of those which bound unique area minimizing disks in $M$. Then $\U$ is
not only dense but also a countable intersection of open dense subsets of $\N$ with respect to the $C^0$-topology.

\vspace{.1in}

The second main result is a similar theorem for absolutely area minimizing surfaces:

\vspace{.1in}

\noindent {\bf Theorem 4.1.} Suppose that $M$ is a compact, orientable, mean convex $3$-manifold. Let $\B$ be the set of simple closed curves on the
boundary of $M$ which are nullhomologous in $M$, and $\T \subset \B$ consist of those which bound unique absolutely area minimizing surfaces in $M$.
Then, $\T$ is not only dense but also countable intersection of open dense subsets of $\B$ with respect to the $C^0$-topology.

\vspace{.1in}

For natural generalizations of these results to the smooth category see the last section of this paper.

The ``lens" technique introduced in \cite{Co1} to prove generic uniqueness results does not generalize to manifolds with nontrivial homology, mainly
because the disks or surfaces may not be separating in $M$ in general, hence one cannot construct a canonical neighborhood ({\em lens})
$N_\Gamma=[\Sigma^-_\Gamma,\Sigma^+_\Gamma]$ which contains all area minimizing disks (or surfaces) for a given nullhomotopic (or nullhomologous)
$\Gamma\subset \partial M$. Provided that these neighborhoods exist and are disjoint for disjoint curves on the boundary, a summation argument which
involves the thickness (or volume) of $N_\Gamma$ would give the desired uniqueness results. But these lenses are the key element in the proof, and
without them, the whole argument collapses.

In the disk case, we still have the disjointness of the area minimizing disks for disjoint boundaries in general by \cite{MY2}. Even though we could
not construct disjoint lenses $N_\Gamma$ for a given curve $\Gamma\subset \partial M$ as in \cite{Co1} because of nontrivial homology, when we
consider the behavior of area minimizing disks near the boundary, we still get disjoint canonical neighborhoods ({\em lens with a big hole})  near
$\partial M$ for disjoint curves, and the summation argument in \cite{Co1} works. Hence the proof of Theorem \ref{disk} can be achieved with a
modification of the original argument in \cite{Co1}.

On the other hand, in the surface case, we do not have the disjointness of the absolutely area minimizing surfaces for disjoint boundaries when the
ambient manifold has nontrivial second homology \cite{Co2}. Hence, the arguments in the disk case we used here do not work either. In order to prove
the surface case, we use a completely new approach. The main idea of the proof is as follows. First, we isometrically embed the original manifold $M$
into a larger manifold $\wh{M}$. Then, we utilize the fact that for any separating curve $\gamma$ in an absolutely area minimizing surface $\Sigma$
in $M$, $\gamma$ bounds a unique absolutely area minimizing surface $S\subset \Sigma$ in $M$ in the following way. For any simple closed curve
$\Gamma\subset\partial M$, consider a nearby simple closed curve $\wh{\Gamma}$ in $\wh{M}-M$. Then, if $\wh{\Sigma}$ is an absolutely area minimizing
surface in $\wh{M}$ with $\partial \wh{\Sigma}=\wh{\Gamma}$, then the curve $\Gamma'=\wh{\Sigma}\cap \partial M$ will be a uniqueness curve in
$\partial M$ near $\Gamma$. This shows density in the surface case. After this density result, we could adapt the summation argument in \cite{Co1} to
finish the proof of Theorem~\ref{surface}.

Note that this imbedding into a larger manifold argument can easily be adapted to the disk case reproving all the results in Section $3$, hence the
results in \cite{Co1}, too. Note also that the mean convexity of $M$ is very crucial to employ this approach (See Remark \ref{meanconvexrem}).

\vspace{.1in}

The organization of the paper is as follows: In the next section we cover some basic results
which will be used later. Section~3 contains the proof of the first main result of
the paper. In section 4, we prove the analogous result regarding absolutely area minimizing surfaces. Section~5 is devoted to
further remarks.

\section{Preliminaries}

In this section, we review the basic results which will be used in the following sections.

\begin{defn} \label{meanconvex} Let $M$ be a compact Riemannian $3$-manifold with boundary. Then $M$ is called {\em mean convex} (or {\em sufficiently convex})
if the following conditions hold.

\begin{itemize}

\item $\partial M$ is piecewise smooth.

\item Each smooth subsurface of $\partial M$ has nonnegative curvature with respect to an inward normal.

\item There exists a Riemannian manifold $N$ such that $M$ is isometric to a submanifold of $N$ and
each smooth subsurface $S$ of $\partial M$  extends to a smooth embedded surface $S'$ in $N$ such that $S' \cap M = S$.

\end{itemize}
\end{defn}

We call a simple closed curve {\em extreme} if it is on the boundary of its convex hull. Our results apply to the extreme curves as the convex hull
naturally satisfies the conditions above. Note that a simple closed curve in the boundary of a mean convex manifold $M$  is called as {\em weak
extreme} or {\em $H$-extreme curve}.

\begin{defn} An {\em area minimizing disk} is a disk which has the smallest area among disks with the same boundary.
An {\em absolutely area minimizing surface} is a surface which has the smallest area among all orientable surfaces (with no topological restriction)
with the same boundary.
\end{defn}

Now, we state the main facts which we use in the following sections.

\begin{lem}[\cite{MY2}, \cite{MY3}]\label{disjointdisks}
Let $M$ be a compact, mean convex $3$-manifold, and $\Gamma\subset\partial M$ be a  simple closed curve nullhomotopic in $M$. Then, there exists an
area minimizing disk $ D\subset M$ with $\partial  D = \Gamma$. All such disks are properly embedded in $M$, i.e. their boundaries are in $\partial
M$, and they are pairwise disjoint. Moreover, area minimizing disks spanning disjoint simple closed curves in $\partial M$ are also disjoint.
\end{lem}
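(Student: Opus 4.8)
The plan is to establish the three assertions of the lemma in turn—existence together with properness, embeddedness, and disjointness—following the classical scheme of Meeks and Yau, with the mean convexity hypothesis of Definition~\ref{meanconvex} entering as the source of barriers and maximum-principle comparisons.

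\textbf{Existence and properness.} Since $\Gamma$ is nullhomotopic in $M$, I would first solve the Plateau problem in the homotopy class: by the work of Morrey (generalizing Douglas--Rado to homogeneously regular Riemannian manifolds), there is a disk spanning $\Gamma$ whose area realizes the infimum over all disks with boundary $\Gamma$. The mean convexity enters as a barrier. Each smooth subsurface of $\partial M$ has nonnegative curvature with respect to the inward normal, and by the third clause of Definition~\ref{meanconvex} extends to an embedded surface in the ambient manifold $N$ meeting $M$ exactly along that subsurface. The strong maximum principle, applied against these extended barriers, forces the minimizing disk to lie in the interior of $M$ away from $\Gamma$, so that it is properly embedded with boundary exactly $\Gamma \subset \partial M$.

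\textbf{Embeddedness.} For embeddedness I would invoke the Meeks--Yau tower construction. The idea is to pass through a sequence of covers built from regular neighborhoods, where the loop theorem and Dehn's lemma become available, and then apply the exchange-and-round-off technique: at an interior self-intersection curve of the minimizing disk, one cuts along the curve and reglues the pieces to produce a disk of the same area but with a non-smooth fold along the former intersection; rounding off this fold strictly decreases area, contradicting minimality unless there was no self-intersection. Descending back through the tower yields an embedded minimizing disk.

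\textbf{Disjointness.} For disjointness I would use the cut-and-paste argument together with the maximum principle. Suppose $D_1$ and $D_2$ are area minimizing disks meeting in their interiors. After a small perturbation the interior intersection is a one-manifold; its components are closed curves, together with (when $\partial D_1 = \partial D_2 = \Gamma$) arcs whose endpoints lie on the shared boundary. Along each such curve I exchange the two pieces of $D_1$ and $D_2$, producing two new disks whose combined area equals that of $D_1$ and $D_2$ but which are non-smooth along the exchange curve; rounding off strictly lowers the total area, contradicting the minimality of both. This rules out transverse interior intersections. Tangential interior intersections are excluded by the strong maximum principle together with unique continuation: two minimal surfaces tangent at an interior point coincide on a whole component, which for disjoint boundary curves is impossible and for common boundary $\Gamma$ collapses to a single disk. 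Boundary intersections are ruled out by the Hopf boundary maximum principle, using that $\Gamma$ is embedded. Applying the same argument to minimizers spanning two disjoint curves $\Gamma_1,\Gamma_2 \subset \partial M$ gives the final clause; here the intersection one-manifold consists only of closed curves, so the exchange-and-round-off applies cleanly.

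\textbf{Main obstacle.} The hardest and most technical step is embeddedness via the tower construction, since making exchange-and-round-off rigorous requires verifying that the cut-and-paste is measurable, that rounding off genuinely decreases area (a comparison estimate), and that the tower of covers terminates. The disjointness arguments, while delicate in their treatment of tangential and boundary intersections through the maximum principle, are comparatively routine once the exchange mechanism of the embeddedness step is in hand.
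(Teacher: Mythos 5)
Your proposal is correct and follows essentially the same route as the paper, which does not prove this lemma but cites it from Meeks--Yau \cite{MY2}, \cite{MY3} and sketches only the exchange-and-round-off trick for the disjointness clause (swap the subdisks $D_i^\beta$ bounded by an intersection curve $\beta$, then push the resulting fold to the convex side to strictly decrease area). Your reconstruction of the remaining ingredients --- Morrey's solution of the Plateau problem with mean convexity supplying barriers for properness, the tower construction for embeddedness, and the maximum principle for tangential and boundary contact --- accurately reflects the content of the cited Meeks--Yau arguments.
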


Note that the last sentence in the previous theorem is known as Meeks-Yau Exchange roundoff trick. The main idea is as follows: If two area
minimizing disks $D_1$ and $D_2$ with disjoint boundaries intersect, the intersection will contain a closed curve $\beta$, and let $D_i^\beta \subset
D_i$ be the smaller disks bounded by $\beta$. Then, by swaping $D_1^\beta$ and $D_2^\beta$, we get a new area minimizing disk
$D_1'=(D_1-D_1^\beta)\cup D_2^\beta$ with a folding curve $\beta$. By pushing $D_1'$ along the folding curve $\beta$ to the convex side decreases
area which contradicts with $D_1'$ being area minimizing.

An analogous statement for absolutely area minimizing surfaces is obtained by combining the following results.

\begin{thm}[\cite{FF},\cite{ASS}, \cite{H}\label{absmincur}]
Let $M$ be a compact, strictly mean convex $3$-manifold and $\Gamma\subset\partial M$ a nullhomologous simple closed curve. Then there exists
$\Sigma\subset M$ an absolutely area minimizing surface with $\partial \Sigma = \Gamma$ and each such $\Sigma$ is smooth away from its boundary and
it is smooth around points of the boundary where $\Gamma$ is smooth.
\end{thm}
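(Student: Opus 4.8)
The plan is to produce $\Sigma$ by the direct method in geometric measure theory and then to import the interior and boundary regularity theories. Since $\Gamma$ is nullhomologous in $M$, it bounds an integral $2$-current, so the admissible class $\mathcal{A}=\{\,T \text{ an integral } 2\text{-current in } M : \partial T = \Gamma\,\}$ is nonempty and $m=\inf_{T\in\mathcal{A}}\mathbf{M}(T)$ is finite, where $\mathbf{M}$ denotes mass. I would choose a minimizing sequence $T_i\in\mathcal{A}$; the uniform bounds $\mathbf{M}(T_i)\le m+1$ together with $\mathbf{M}(\partial T_i)=\mathrm{length}(\Gamma)$ allow me to apply the Federer--Fleming compactness theorem \cite{FF} and extract a subsequence converging in the flat norm to an integral current $T$ with $\partial T=\Gamma$. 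Because mass is lower semicontinuous under flat convergence, $\mathbf{M}(T)\le m$, so $T$ attains the infimum and is absolutely area minimizing.

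The role of (strict) mean convexity is to guarantee that the minimizer behaves like an unconstrained one. By Definition \ref{meanconvex}, each smooth piece $S$ of $\partial M$ has nonnegative curvature with respect to an inward normal and extends to an embedded surface $S'$ in the ambient manifold $N$ with $S'\cap M=S$; these pieces serve as barriers. A standard cut-and-paste comparison then shows that an area-minimizing current cannot be tangent to $\partial M$ from the inside, so the limit $T$ meets $\partial M$ only along $\Gamma$ and is otherwise contained in the interior. This is exactly what prevents mass from accumulating on $\partial M$, forces $\partial\Sigma=\Gamma$ rather than a larger subset of $\partial M$, and lets me invoke free-boundary regularity below instead of obstacle-type regularity.

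For regularity I would treat the interior and the boundary separately. Interior smoothness is the codimension-one regularity theory \cite{ASS}: the support of a mass-minimizing integral $n$-current in an $(n+1)$-dimensional manifold is a smooth embedded hypersurface off a singular set of Hausdorff dimension at most $n-7$. Here $n=2$ and $\dim N=3$, so the estimated singular set is empty and $\Sigma=\mathrm{supp}(T)$ is a smooth embedded surface in the interior; the same dimension count applies verbatim to \emph{any} minimizer, which yields the conclusion for each such $\Sigma$. At points of $\Gamma$ where $\Gamma$ is smooth I would invoke the boundary regularity theorem \cite{H}, according to which a minimizer is, near such a point, a smooth embedded manifold-with-boundary that meets $\Gamma$ regularly and has no boundary branch points.

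The hard part is the boundary regularity. Existence is a routine application of compactness and lower semicontinuity, and in this low ambient dimension interior regularity is classical, the singular set being empty for purely dimensional reasons. Controlling the minimizer at the free boundary, however---ruling out tangential self-contact and branching along $\Gamma$ and upgrading the weak current to a smooth manifold-with-boundary up to $\Gamma$---rests on the delicate monotonicity and comparison estimates of \cite{H}, and carrying that out from scratch would amount to essentially the entire analytic content of the statement.
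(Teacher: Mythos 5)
Your proposal is correct and follows exactly the route the paper intends: the paper gives no proof of this theorem, stating it as a combination of the cited results, and your outline is precisely that standard assembly --- existence by the direct method and Federer--Fleming compactness \cite{FF}, interior smoothness from the codimension-one regularity theory \cite{ASS} with the singular set empty since $n=2<7$, and boundary regularity at smooth points of $\Gamma$ from Hardt \cite{H}, with mean convexity supplying the barriers that keep the minimizer properly embedded. Your closing assessment is also apt: the boundary regularity carries the real analytic weight, which is exactly why the paper cites \cite{H} rather than arguing it.
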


Hass proved the following statement for closed $3$-manifolds. It can be generalized with a slight modification of his argument. This lemma can be
considered as the adaptation of Meeks-Yau Exchange Roundoff trick to the surface case.

\begin{lem}[\cite{Ha}]\label{Hass} Let $M$ be an orientable, mean convex $3$-manifold, and $\Sigma_1$ and $\Sigma_2$ be two homologous, properly embedded,
absolutely area minimizing surfaces in $M$. If  $\partial \Sigma_1$ and $\partial \Sigma_2$ are disjoint or the same, then $\Sigma_1$ and $\Sigma_2$
are disjoint.
\end{lem}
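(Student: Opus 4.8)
The plan is to adapt the Meeks--Yau exchange roundoff trick, described above in the disk case, to the surface setting, with the homology hypothesis playing the role that the innermost-disk argument plays for disks. First I would put $\Sigma_1$ and $\Sigma_2$ in transverse position. Since both surfaces are minimal in the interior of $M$ by Theorem~\ref{absmincur}, the unique continuation principle for minimal surfaces shows that if they agreed on any open set they would coincide; so if $\Sigma_1\neq\Sigma_2$, their intersection is a $1$-manifold. Because $\partial\Sigma_1$ and $\partial\Sigma_2$ are disjoint or equal, no intersection occurs on the boundary, so $\Sigma_1\cap\Sigma_2$ is a finite disjoint union of simple closed curves $\gamma_1,\dots,\gamma_n$ in the interior of $M$.

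Next comes the step where orientability and the homology hypothesis enter. I would fix orientations on $\Sigma_1$ and $\Sigma_2$ compatible with $[\Sigma_1]=[\Sigma_2]$, and along each $\gamma_i$ perform the oriented cut-and-paste (exchange) operation: cut both surfaces along all the $\gamma_i$ and reconnect the sheets meeting along each $\gamma_i$ in the orientation-respecting way that removes the crossing. This reglues pieces without adding any area, so the resulting $2$-cycle represents the same homology class, spans the same boundary, and has total area equal to $\mathrm{Area}(\Sigma_1)+\mathrm{Area}(\Sigma_2)$. Keeping track of orientations, the reglued object separates into two surfaces $\Sigma_1'$ and $\Sigma_2'$ with $\partial\Sigma_i'=\partial\Sigma_i$ and $[\Sigma_i']=[\Sigma_i]$.

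Then I would run the area comparison exactly as in the disk case. Each $\Sigma_i'$ is a competitor for the absolutely area minimizing surface $\Sigma_i$, so $\mathrm{Area}(\Sigma_i')\ge\mathrm{Area}(\Sigma_i)$; combined with the equality of the two totals this forces $\mathrm{Area}(\Sigma_i')=\mathrm{Area}(\Sigma_i)$, so each $\Sigma_i'$ is itself absolutely area minimizing and hence smooth away from its boundary by Theorem~\ref{absmincur}. But $\Sigma_1'$ carries a genuine fold along each $\gamma_i$, where two smooth sheets meet at a positive angle; pushing the surface off such a fold to the convex side strictly decreases area, contradicting minimality. Hence there are no intersection curves, and $\Sigma_1$ and $\Sigma_2$ are disjoint (or coincide).

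The main obstacle I expect is making the oriented exchange globally well defined across all of $\gamma_1,\dots,\gamma_n$ simultaneously and checking that the output is an honestly embedded, orientable pair of surfaces with the correct boundaries and homology classes; this is precisely the step that breaks down without orientability or the homology assumption, and it is the surface analogue of selecting an innermost intersection curve. A secondary point requiring care is confirming that the fold produced along each $\gamma_i$ is a genuine crease of positive angle, so that the roundoff smoothing yields a \emph{strict} area decrease; transversality of the intersection guarantees this. Finally, the generalization from Hass's closed-manifold setting reduces to verifying that the entire construction takes place in the interior of $M$, which is exactly what the hypothesis that $\partial\Sigma_1$ and $\partial\Sigma_2$ are disjoint or equal provides.
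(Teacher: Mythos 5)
Your overall strategy---an exchange (swap) argument terminating in a minimizer that is singular along the intersection, hence a contradiction with interior regularity \cite{Fe}---is the same as the paper's, but the step you defer as ``the main obstacle I expect'' is precisely the mathematical content of the paper's proof, and without it your argument has a genuine gap. You never establish that the orientation-respecting cut-and-paste along all of $\gamma_1,\dots,\gamma_n$ simultaneously decomposes into two surfaces $\Sigma_1'$, $\Sigma_2'$ with $\partial\Sigma_i'=\partial\Sigma_i$: a component of the resolved object could a priori contain both boundary curves, or neither, and then the separate comparisons $|\Sigma_i'|\geq|\Sigma_i|$---the engine of your contradiction---are unavailable. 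The paper fills exactly this hole, and not by local orientation bookkeeping at each curve: since $\Sigma_1$ and $\Sigma_2$ are homologous and properly embedded, they cobound a codimension-$0$ submanifold $M'\subset M$ with $\Sigma_1\cup\Sigma_2\subset\partial M'$, so the two surfaces separate each other \cite{Ha}. Writing $\Sigma_1\setminus\Sigma_2=S_1^+\cup S_1^-$ and $\Sigma_2\setminus\Sigma_1=S_2^+\cup S_2^-$ with $\partial S_1^-=\partial S_2^-=\Sigma_1\cap\Sigma_2$ (the $S_i^+$ containing $\partial\Sigma_i$), the swap $\Sigma_1'=(\Sigma_1\setminus S_1^-)\cup S_2^-$ is globally well defined with the correct boundary, and minimality of \emph{both} surfaces forces $|S_1^-|=|S_2^-|$, since otherwise swapping the cheaper piece into the other surface would beat one of the two minimizers. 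Then $\Sigma_1'$ is absolutely area minimizing yet singular along $\Sigma_1\cap\Sigma_2$, contradicting the regularity theorem---exactly the endgame you describe. Supplying the cobounding region is what converts your sketch into a proof.

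Two smaller inaccuracies: you cannot ``put $\Sigma_1$ and $\Sigma_2$ in transverse position,'' since they are fixed minimizers and may not be perturbed; fortunately the separation argument needs no transversality (the intersection of distinct minimal surfaces is a $1$-complex, possibly with tangential crossing points, and one-sided tangential touching is already excluded by the maximum principle). Also, in the case $\partial\Sigma_1=\partial\Sigma_2$ your claim that ``no intersection occurs on the boundary'' fails outright---the boundaries coincide---though this is harmless since only the interior intersection set enters the swap.
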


\begin{proof} Since
$\Sigma_1$ and $\Sigma_2$ are in the same homology class, they separate a codimension-$0$ submanifold $M'$ from $M$, and $\Sigma_1\cup\Sigma_2
\subset \partial M'$. Then, $\Sigma_1$ and $\Sigma_2$ separate each other \cite{Ha}. Let $\Sigma_1 \setminus \Sigma_2 = S_1^+\cup S_1^-$, and
$\Sigma_2 \setminus \Sigma_1 = S_2^+\cup S_2^-$. Assuming $\partial S_1^-=\partial S_2^- = \Sigma_1\cap\Sigma_2$ ($S_1^+$ and $S_2^+$ are the components
containing $\partial \Sigma_1$ and $\partial \Sigma_2$ respectively), $\Sigma_1'=(\Sigma_1 \setminus S_1^-)\cup S_2^-$ would be another absolutely area minimizing surface in $M$
with boundary $\partial \Sigma_1$. This is because $\Sigma_1$ and $\Sigma_2$ are absolutely area minimizing surfaces, and $\partial S_1^-=\partial S_2^-$
implies $|S_1^-|=|S_2^-|$. However, $\Sigma_1'$ has singularity along $\Sigma_1\cap\Sigma_2$ which contradicts the regularity theorem for
absolutely area minimizing surfaces \cite{Fe}.
\end{proof}

Now, we state a lemma about the limit of area minimizing disks in a mean convex manifold.

\begin{lem}[\cite{HS}]\label{seq}
Let $M$ be a compact, mean convex $3$-manifold and let $\{ D_i\}$ be a sequence of properly embedded area minimizing disks in $M$. Then there is a
subsequence of $\{ D_i\}$ which converges to a countable collection of properly embedded area minimizing
disks in $M$.
\end{lem}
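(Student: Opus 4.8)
The plan is to combine uniform a priori estimates with the compactness theory for stable minimal surfaces. After passing to a subsequence I would first arrange that the boundary curves $\partial D_i \subset \partial M$ converge, so that in particular their lengths stay uniformly bounded; the isoperimetric inequality for minimal surfaces then yields a uniform area bound $\sup_i \mathrm{Area}(D_i) < \infty$. Securing this area bound is the first essential point, and it is here that compactness of $M$ and the barrier surfaces $S'$ from Definition~\ref{meanconvex} enter, both to confine the $D_i$ to $M$ and to prevent area from escaping along $\partial M$.

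Next I would invoke the interior estimate. Each $D_i$ is area minimizing, hence a stable minimal surface, and since $M$ is orientable and the $D_i$ are embedded they are two-sided; thus Schoen's curvature estimate for stable minimal surfaces applies and bounds $|A_{D_i}|(p)$ in terms of $1/\mathrm{dist}(p,\partial D_i)$ and the geometry of $M$. Because $\partial D_i \subset \partial M$, this gives a uniform curvature bound on every set $\{\,\mathrm{dist}(\cdot,\partial M)\ge \delta\,\}$, so combined with the area bound and standard elliptic compactness a subsequence converges smoothly (with finite multiplicity) on such sets to a minimal lamination $\mathcal{L}$ of $\mathrm{int}(M)$. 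A leaf of $\mathcal{L}$ is a smooth limit of area minimizing disks, and if some leaf admitted a strictly cheaper competitor with the same boundary one could graft it into $D_i$ for large $i$ and contradict minimality; hence every leaf is itself area minimizing, and by Lemma~\ref{disjointdisks} the leaves are pairwise disjoint and properly embedded. Each leaf, arising as a smooth limit of embedded disks of bounded area, is moreover simply connected, so each is an area minimizing disk.

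For countability I would exhaust $\mathrm{int}(M)$ by compact sets $K_n$: the monotonicity formula gives a positive lower bound, depending only on $K_n$, for the area of any minimal leaf meeting $K_n$, so the uniform total area bound forces only finitely many leaves to meet each $K_n$, and since the leaves may shrink as they approach $\partial M$ one obtains a countable rather than finite collection. I expect the main obstacle to lie entirely at the boundary: the interior convergence is smooth with no concentration, but one must still show that the limiting leaves close up to \emph{properly} embedded disks meeting $\partial M$ in the limiting boundary curves, with no loss of area into thin collars along $\partial M$. This is precisely where mean convexity at $\partial M$ (the barriers $S'$ together with the maximum principle), the boundary regularity theory for area minimizing disks, and the genuinely area minimizing (not merely stable) nature of the $D_i$ must all be brought to bear.
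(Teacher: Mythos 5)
The paper offers no proof of this lemma at all --- it is quoted verbatim from Hass--Scott \cite{HS} --- so your proposal can only be measured against the standard argument, and there it has a genuine gap at the very first step. You cannot ``arrange that the boundary curves converge, so that in particular their lengths stay uniformly bounded'': passing to a subsequence gives at best Hausdorff (or $C^0$) convergence of the curves $\partial D_i \subset \partial M$, and $C^0$-convergence of simple closed curves carries no length bound whatsoever --- the $\partial D_i$ may converge uniformly to a limit curve while their lengths tend to infinity. Since the lemma assumes nothing about the boundaries, no uniform bound $\sup_i |D_i| < \infty$ can be secured, and everything you later hang on it collapses: the isoperimetric step, and crucially the countability argument via ``total area divided by the monotonicity lower bound on $K_n$.'' Indeed, this is exactly why the conclusion allows a countably \emph{infinite} collection of limit disks; the interesting case is $|D_i| \to \infty$. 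Note also that in the application inside the proof of Theorem~\ref{disk} the curves $\gamma_n$ converge only in $C^0$, so the lemma is genuinely needed in the absence of any global area bound.

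The repair, and the actual mechanism in \cite{HS}, is to replace the global bound by a \emph{local} one: because each $D_i$ is area minimizing (not merely stable), a comparison argument in a small ball $B_r(p)$ with $\overline{B_r(p)} \cap \partial M = \emptyset$ --- replacing the components of $D_i \cap B_r(p)$ by pieces of $\partial B_r(p)$, using embeddedness and the exchange-roundoff trick to stay in the disk class --- yields $|D_i \cap B_r(p)| \leq C r^2$ uniformly in $i$. This local area bound, combined with the Schoen curvature estimate you quote (which, correctly, requires no area bound), gives subsequential convergence on interior compact sets; the local bound is what limits the number of sheets, so by monotonicity only finitely many leaves pass through each small interior ball, and a countable exhaustion of $int(M)$ gives countably many leaves in total. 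Without the local bound, curvature estimates alone permit limit laminations with uncountably many leaves (foliated regions), so your lamination step is incomplete as written even before countability. Finally, your grafting argument that leaves are minimizing, and your claim that each leaf is simply connected, both lean on the same local area control (curves in a leaf are limits of curves in $D_i$ bounding subdisks of uniformly controlled area, and the swap must preserve disk type); as stated these are labels for steps Hass--Scott actually carry out, not consequences of anything you established.
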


\section{Uniqueness of area minimizing disks}

This section is devoted to the proof of the following theorem.

\begin{thm}\label{disk}
Suppose that $M$ is a compact, orientable, mean convex $3$-manifold. Let $\N$ be the set of simple closed curves on the boundary of $M$ which are
nullhomotopic in $M$, and $\U \subset \N$ consist of those which bound unique area minimizing disks in $M$. Then $\U$ is not only dense but also a
countable intersection of open dense subsets of $\N$ with respect to the $C^0$-topology.
\end{thm}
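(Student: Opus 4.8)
The plan is to reduce the uniqueness of the area minimizing disk bounded by a curve $\Gamma$ to the vanishing of a single nonnegative quantity $V(\Gamma)$, the volume of a canonical neighborhood of the disks near $\partial M$, and then to run a volume-summation argument to control how often $V$ can be positive. First I would fix a nullhomotopic $\Gamma\subset\partial M$ and collect all area minimizing disks it bounds. By Lemma~\ref{disjointdisks} these disks are properly embedded with pairwise disjoint interiors and common boundary $\Gamma$, so in a collar neighborhood of $\Gamma$ in $M$ they are linearly ordered. Using the compactness statement of Lemma~\ref{seq}, the supremum and infimum of this family are attained by two distinguished area minimizing disks $D^+_\Gamma$ and $D^-_\Gamma$ (the \emph{outermost} disks), and every area minimizing disk for $\Gamma$ lies between them. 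The region they cut out in a collar of $\Gamma$ --- the ``lens with a big hole'' of the introduction --- is the canonical neighborhood $N_\Gamma$, and I set $V(\Gamma)=\mathrm{vol}(N_\Gamma)$.

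The key reduction I would establish next is that $\Gamma$ bounds a unique area minimizing disk if and only if $V(\Gamma)=0$. One direction is immediate, since uniqueness forces $D^+_\Gamma=D^-_\Gamma$. For the converse I argue by contrapositive: if $D^+_\Gamma\neq D^-_\Gamma$ then, being distinct minimal disks with the same boundary, they cannot agree on an open subset of the collar (minimal surfaces are real-analytic, so agreement on an open set would force global agreement by unique continuation), hence they separate in every collar of $\Gamma$ and $N_\Gamma$ has positive volume. Thus $\U=\{\,\Gamma\in\N : V(\Gamma)=0\,\}$, and writing $\U_n=\{\,\Gamma\in\N : V(\Gamma)<1/n\,\}$ gives $\U=\bigcap_n \U_n$.

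It then remains to show each $\U_n$ is open and dense in the $C^0$-topology. For density I would foliate a thin collar of a given $\Gamma_0\in\N$ in $\partial M$ by parallel simple closed curves $\{\Gamma_t\}_{t\in(-\epsilon,\epsilon)}$, all of which are nullhomotopic in $M$ for small $\epsilon$ since each is homotopic to $\Gamma_0$. Because the $\Gamma_t$ are pairwise disjoint, the last sentence of Lemma~\ref{disjointdisks} forces the canonical neighborhoods $N_{\Gamma_t}$ to have pairwise disjoint interiors, hence $\sum_t V(\Gamma_t)\le \mathrm{vol}(M)<\infty$. Therefore $V(\Gamma_t)=0$ for all but countably many $t$, producing uniqueness curves arbitrarily $C^0$-close to $\Gamma_0$; this already gives density of $\U$ and a fortiori of each $\U_n$. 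For openness I would prove that $V$ is upper semicontinuous: if $\Gamma'\to\Gamma$ in $C^0$, Lemma~\ref{seq} forces the outermost disks $D^\pm_{\Gamma'}$ to subconverge to area minimizing disks for $\Gamma$, which by definition lie between $D^-_\Gamma$ and $D^+_\Gamma$, so $\limsup_{\Gamma'\to\Gamma} V(\Gamma')\le V(\Gamma)$ and $\{\,V<1/n\,\}$ is open.

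I expect the main obstacle to be the construction and control of the canonical neighborhood $N_\Gamma$ itself. With nontrivial $H_2(M)$ the disks need not separate $M$, so there is no global lens; one must work entirely inside a collar of $\Gamma$, verify that the outermost disks genuinely bound a region there, and check that $V$ behaves well under perturbation of $\Gamma$ and under the disjointness and limiting operations. Making the volume-summation estimate and the upper semicontinuity simultaneously compatible with this purely local, ``holed'' neighborhood --- rather than the global lens of \cite{Co1} --- is where the real work lies.
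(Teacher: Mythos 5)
Your blueprint is the ``lens plus volume--summation'' scheme of \cite{Co1}, and the gap is exactly the point where that scheme breaks and where this paper's actual work lies: the canonical region $N_\Gamma$ is never constructed, and with $H_2(M;\Z)\neq 0$ it cannot be constructed the way you describe. Your outermost disks $D^\pm_\Gamma$ exist (by Lemma~\ref{disjointdisks} all area minimizing disks spanning $\Gamma$ are pairwise disjoint, and Lemma~\ref{seq} gives extremal limits; your unique-continuation aside is unnecessary, since distinct disks are already disjoint), but they need not separate $M$, so ``the region between them'' --- even restricted to a collar --- is not a priori well defined, and every subsequent step of your proposal quantifies over it: the identity $\U=\{V=0\}$, the pairwise disjointness of the $N_{\Gamma_t}$ needed for $\sum_t V(\Gamma_t)\le \mathrm{vol}(M)$, and the upper semicontinuity of $V$. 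You flag this yourself as ``where the real work lies,'' but flagging it is not supplying it. The paper's resolution is genuinely different in kind: instead of a $3$-dimensional region and its volume, it builds for each $\Gamma$ a fixed auxiliary annulus $A'_\Gamma$, properly embedded in $M$ with $A_\Gamma\cup A'_\Gamma$ bounding a solid torus, and proves the key Lemma~\ref{keylem}: every properly embedded area minimizing disk spanning an essential curve of $A_\Gamma$ meets $A'_\Gamma$ in a \emph{unique essential} curve. This lemma is the technical heart (exchange--roundoff arguments to kill inessential or concentric intersections, a careful construction of $A'_\Gamma$ via an auxiliary mean convex piece when no properly embedded minimizing annulus exists, and the area estimates forcing $|D_\Gamma|<6|A_\Gamma|$ to rule out nonconcentric multiple intersections). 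The quantity summed is then the $2$-dimensional area of the trace annulus $R^\Gamma_\gamma\subset A'_\Gamma$, with disjointness for disjoint curves (Lemma~\ref{region1}) coming from the solid torus structure together with Lemma~\ref{disjointdisks}; the sum is bounded by $|A'_\Gamma|$, not $\mathrm{vol}(M)$. Without Lemma~\ref{keylem}, a disk for $\Gamma_t$ could meet your collar boundary in many components, some inessential, and your region assignments could be ill defined or overlapping, so your summation estimate does not get off the ground.

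Your openness step has a second, smaller divergence worth noting. The paper does not prove semicontinuity of a single function of $\Gamma$; its $U_n$ is defined existentially (there exists a reference curve $\Gamma$ with $\gamma$ essential in $A_\Gamma$ and $|R^\Gamma_\gamma|<1/n$), and openness is proved by sandwiching: using density, one finds uniqueness curves $\gamma^\pm$ on either side of $\gamma$ whose unique disks $D^\pm$ have traces inside a slightly larger annulus $R$ with $|R|<1/n$; then Lemma~\ref{disjointdisks} forces every area minimizing disk spanning a curve in the open annulus between $\gamma^\pm$ to be disjoint from $D^+\cup D^-$, hence trapped, so its trace region stays in $R$. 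This barrier argument avoids the boundary-regularity and volume-continuity issues your $\limsup$ argument would have to confront (the convergence in Lemma~\ref{seq} is interior convergence of a sequence of disks to a countable collection, and extracting quantitative continuity of $V$ near $\partial M$ from it is not automatic). In short: your reduction-to-a-measure strategy is the right genre, but the proposal is missing the one new idea of the paper --- replacing the homologically obstructed lens by the trace geometry on a controlled auxiliary annulus --- and without it the argument does not close.
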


\begin{rmk} \label{bdry} In the proof of this theorem we ignore the curves in $\N$ which bound area minimizing disks in $\partial M$. This is justified by
the fact that a curve $\gamma$ in the interior of an area minimizing disk $D \in \partial M$ cannot bound a properly embedded area minimizing
disk $D'$ since swapping the disk in $D$ bounded by $\gamma$ with $D'$ and rounding off (exchange-roundoff trick) would give a disk
with boundary the same as but area strictly smaller than $D$. In particular, such a curve $\gamma$ is clearly an interior point of $\U$.
\end{rmk}

\begin{proof}
For each $\Gamma \in \N$ fix an annulus neighborhood $A_{\Gamma} \subset \partial M$ and a properly embedded annulus $A'_{\Gamma} \subset M$ with
$\partial A_{\Gamma} = \partial A'_{\Gamma}$ as in Lemma~\ref{keylem}, i.e.,

\begin{itemize}
\item $A_\Gamma \cup A'_\Gamma$ bounds a solid torus in $M$, and
\item if the boundary of a properly embedded area minimizing disk $D\subset M$ is essential in $A_{\Gamma}$, then $D$ intersect
$A'_{\Gamma}$ in a unique essential simple closed curve (see Figure~\ref{lens}).
\end{itemize}

\begin{figure}[b]\label{lens}

\relabelbox  {\epsfxsize=2in

\centerline{\epsfbox{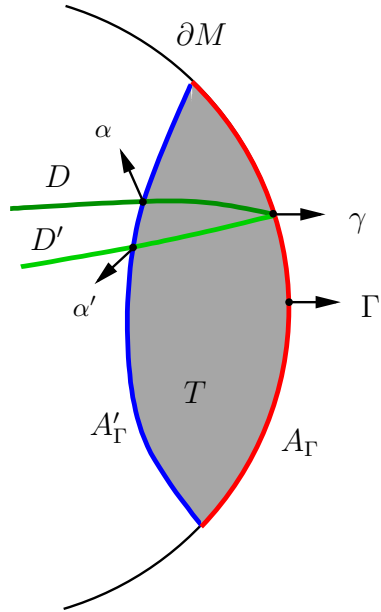}}}

\relabel{1}{$\partial M$}

\relabel{2}{$D$}

\relabel{3}{ $D'$}

\relabel{4}{ $A_\Gamma'$}

\relabel{5}{ $\gamma$}

\relabel{6}{ $\Gamma$}

\relabel{7}{$A_\Gamma$}

\relabel{8}{$T$}

\relabel{9}{\footnotesize $\alpha'$}

\relabel{10}{\footnotesize $\alpha$}

\endrelabelbox

\caption{\label{fig1} \small {For any $\gamma \subset A_\Gamma\subset \partial M$, any area minimizing disk $D$ with $\partial D=\gamma$
intersects $A_\Gamma'$ in a unique essential curve $\alpha$. The grey region represents the solid torus $T$ in $M$ with $\partial T=A_\Gamma\cup
A_\Gamma'$.}}

\end{figure}

For an essential simple closed curve $\gamma$ in $A_\Gamma$, let $R^\Gamma_\gamma$ denote (as in Lemma~\ref{region2}) the smallest annulus in
$A'_\Gamma$ which contains the intersection of $A'_\Gamma$ with all the area minimizing disks spanning $\gamma$. Note that $\gamma \in \U$ if and
only if $|R^\Gamma_\gamma| = 0$,  where $| \cdot | $ denotes the area.

First we will prove that $\U$ is dense in $\N$. Let $\Gamma \in \N$, and foliate $A_\Gamma$  by essential simple closed curves $\{ \Gamma_t : t
\in [-\epsilon, \epsilon]\}$ such that $\Gamma_0 = \Gamma$. By Lemma~\ref{region1}, the regions $R^\Gamma_{\Gamma_t}$ and $R^\Gamma_{\Gamma_s}$
in $A'_\Gamma$ are disjoint for $s \neq t$. Therefore $$\sum_{t \in [-\epsilon, \epsilon]} |R_{\Gamma_t}^\Gamma | < |A'_\Gamma| < \infty \ .
$$ Hence $| R_{\Gamma_t}^\Gamma| > 0$ only for countably many $ t \in [-\epsilon , \epsilon]$, i.e. $\Gamma_t$ bounds a unique area minimizing
disk for uncountably many $t\in [-\epsilon , \epsilon]$. Since we began with an arbitrary $\Gamma \in \N$, this proves that $\U$ is dense in
$\N$.

To prove that $\U$ is the intersection of countably many open dense subsets of $\N$ let
$$U_n = \{ \gamma \in \N | \ \mbox{there exists } \
\Gamma \in \N \mbox{ such that } \gamma \mbox{ is essential in } A_\Gamma \mbox { and } |R_\gamma^\Gamma| < 1/n \} \ ,$$ for every $n \in \Z_+$.
Observe that $\U = \cap_{n\in \mathbb{N}} U_n$, and in particular, each $U_n$ is dense. It remains to show that every $U_n$ is open. Let $\gamma \in
U_n$, choose $\Gamma \in \N$ such that $\gamma$ is essential in $A_\Gamma$ with $| R_\gamma^\Gamma | < 1/n$, and choose an annular region $R$ in
$A'_\Gamma$  with $|R| < 1/n$ whose interior contains $R^\Gamma_\gamma$. Since $\U$ is dense in $\N$, there is a sequence $\{ \gamma_n \}$ of
pairwise disjoint, essential curves in $A_\Gamma$ converging to $\gamma$ such that each $\gamma_n$ bounds a unique area minimizing disk $D_n$ in $M$.
We can arrange that all these curves are in a prescribed component of $A_\Gamma \setminus \gamma$. By Lemma~\ref{seq}, the sequence $\{ D_n \}$ has a
subsequence converging to a countable collection of area minimizing disks spanning $\gamma$. This implies the existence of essential curves
$\gamma^+$ and $\gamma^-$ in $A_\Gamma$ such that

\begin{itemize}
\item the curves $\gamma^+$ and $\gamma^-$ are contained in different components of $A_\Gamma \setminus \gamma$,
\item each of $\gamma^{\pm}$ bounds a unique area minimizing disk $D^{\pm}$, and
\item $D^{\pm} \cap A'_\Gamma \subset R$.
\end{itemize}

Let $A_\gamma$ be the open annulus in $A_\Gamma$ bounded by $\gamma^{\pm}$, and $V_\gamma$ be the set of all simple closed curves essential in
$A_\gamma$. Note that $V_\gamma$ is an open neighborhood of $\gamma$ in $\N$. Moreover, $V_\gamma \subset U_n$ because $D^+ \cup D^-$ separates
the solid torus bounded by $A_\Gamma \cup A'_\Gamma$, and an area minimizing disk spanning any $\alpha \in V_\gamma$ has to be disjoint from
$D^+ \cup D^-$, forcing $R_\alpha^\Gamma$ to remain inside $R$. This proves that $U_n$ is open in $\N$ and finishes the proof.
\end{proof}

In the rest of the section we will prove the lemmas used in the proof of Theorem~\ref{disk}.

\begin{lem}\label{keylem}
For every $\Gamma \in \N$, there exist annuli $A_\Gamma$ and $A'_\Gamma$ with common boundary, former a neighborhood of $\Gamma$ in $\partial M$
and latter properly embedded in $M$, such that $A_\Gamma \cup A'_\Gamma$ bounds a solid torus in $M$, and any properly embedded area minimizing
disk in $M$ spanning an essential curve in $A_\Gamma$ intersects $A'_\Gamma$ in a unique essential curve.
\end{lem}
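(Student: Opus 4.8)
My plan is to realize $A_\Gamma$ and $A'_\Gamma$ as the two faces of a thin, strictly convex solid--torus collar of $\Gamma$. Using a collar $\partial M\times[0,\delta)$ of the boundary together with the mean convexity from Definition~\ref{meanconvex}, I would fix an annular neighborhood $A_\Gamma\subset\partial M$ of $\Gamma$ and let $A'_\Gamma$ be the ``tent'' over $A_\Gamma$ obtained by pushing its core into the collar, arranged so that $\partial A'_\Gamma=\partial A_\Gamma$, so that the region $T$ under the tent is a solid torus with $\partial T=A_\Gamma\cup A'_\Gamma$, and so that $A'_\Gamma$ is \emph{strictly} mean convex with mean curvature vector pointing into $T$. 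In this model an essential curve in $A_\Gamma$ is a longitude of $T$, an essential component of $D\cap A'_\Gamma$ is again a longitude, and an inessential component bounds a disk in $A'_\Gamma$. Since the tube is ours to choose, taking $\delta$ and the height of the tent small keeps $T$ thin, which I will need for uniformity.

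To see that any area minimizing disk $D$ with $\partial D=\gamma$ essential in $A_\Gamma$ meets $A'_\Gamma$ in at least one essential curve, recall from Lemma~\ref{disjointdisks} that $D$ is properly embedded, so $D\cap\partial M=\partial D=\gamma$ and hence $D\cap A_\Gamma=\gamma$. Putting $D$ in general position and slicing $T$ by its meridian disks $\{d_\theta\}$, each $d_\theta$ cuts $D$ in a compact $1$--manifold whose boundary lies on $\partial d_\theta$: on the $A_\Gamma$--arc it meets $\gamma$ in an odd number of points (as $\gamma$ is essential in $A_\Gamma$), while on the $A'_\Gamma$--arc the essential components of $D\cap A'_\Gamma$ contribute oddly and the inessential ones evenly. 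Since the total number of boundary points is even, the number of essential components is odd, in particular nonzero. Equivalently, $[\gamma]$ generates $H_1(T;\mathbb{Z}/2)$ and must be cancelled by the classes carried by $D\cap A'_\Gamma$.

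The crux is to upgrade ``odd'' to ``exactly one'', and here I would exploit the strict convexity of the tube. Let $D^0$ be the piece of $D$ cut along $D\cap A'_\Gamma$ that contains $\gamma$; since $D$ enters $M$ on the convex side, $D^0\subset T$. If there were $\ge 3$ essential curves, then following $D$ inward it would cross $A'_\Gamma$ essentially, pass through $\overline{M\setminus T}$, and cross back, and the resulting piece $A_D\subset D\cap\overline{M\setminus T}$ between two adjacent essential curves $\alpha_1,\alpha_2$ is an annulus cobounding with the subannulus $B\subset A'_\Gamma$ between them. Replacing $A_D$ by $B$ gives a competing disk spanning $\gamma$, so minimality of $D$ yields $|A_D|\le|B|$; but strict convexity of $A'_\Gamma$ from the $\overline{M\setminus T}$ side makes $B$ the unique least--area annulus there with boundary $\alpha_1\cup\alpha_2$, giving $|B|<|A_D|$ -- a contradiction. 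A single essential crossing, by contrast, leaves the outer piece of $D$ bounded by just one (essential, hence non--capping) curve, so no such swap exists; this is the configuration that survives. The same barrier argument applied to an innermost inessential curve and its cap $d\subset A'_\Gamma$ forbids any piece of $D$ lying in $\overline{M\setminus T}$ and bounded by inessential curves, so after this reduction consecutive essential curves cobound clean annuli, and $D$ crosses $A'_\Gamma$ in exactly one essential curve.

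The step I expect to be the main obstacle is precisely the barrier estimate $|B|\le|A_D|$ and its uniformity. Unlike the classical Meeks--Yau exchange--roundoff, where one compares two genuinely area minimizing disks, here $A'_\Gamma$ is an auxiliary surface, so the comparison only closes if $A'_\Gamma$ is a true strict barrier for area minimizers in the adjacent region; I would secure this from the strict convexity built into the construction together with the maximum principle. Two points need care: excursions on the \emph{inner} side (inessential bubbles $\Delta\subset\overline T$), which the exterior barrier does not detect and which I would exclude using the thinness of $T$ and the maximum principle, since such a $\Delta$ would be an area minimizing cap inside the thin convex tube tangent to $A'_\Gamma$ along an inessential curve; and the requirement that a single tube $T$ work simultaneously for every essential $\gamma\subset A_\Gamma$ and every area minimizing disk spanning it. It is exactly the thinness and strict convexity of the fixed tube that make the barrier estimate independent of the particular disk, and verifying this uniform barrier property is where the real work of the lemma lies.
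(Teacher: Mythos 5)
There is a genuine gap, and it sits exactly where you predicted: the barrier estimate $|B|\le |A_D|$ is not a consequence of strict mean convexity plus the maximum principle, and the paper does not attempt anything of the sort. A strictly mean convex ``tent'' is not area minimizing, so a subannulus $B\subset A'_\Gamma$ (or a cap $d\subset A'_\Gamma$ bounded by an inessential component) has no reason to be least-area among annuli (or disks) in $\overline{M\setminus T}$ with the same boundary; the maximum principle only forbids interior tangencies and says nothing about transverse crossings or areas. A calibration/foliation argument, pushing the strictly mean convex tent outward, would give the comparison only in a thin outer collar, but $A_D$ is merely a piece of an area minimizing disk and can leave any collar, so the comparison does not close. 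The paper's solution is to make $A'_\Gamma$ itself (a union of) area minimizing annuli: by \cite{MY2} one can shrink $A_\Gamma$ so that $\partial A_\Gamma$ bounds an area minimizing annulus, and when the only such annulus is $A_\Gamma\subset\partial M$ itself (so no properly embedded one exists), the paper builds $A'_\Gamma$ from area minimizing disks $D^\pm$ spanning $\partial A_\Gamma$, curves $\gamma_n^\pm\subset D^\pm$, and an area minimizing annulus $A_n$ in the mean convex complementary piece $\widehat M$, proving by an exchange argument that $A_n$ minimizes in all of $M$. With minimality of $A'_\Gamma$ in hand, every one of your swaps becomes a genuine exchange--roundoff between two minimizers: for an inessential component $\alpha$, whichever of the cap $d\subset A'_\Gamma$ and the disk $D_\alpha\subset D_\gamma$ is not larger gets swapped in, producing a corner and a strict contradiction either way; your ``inner bubbles'' in $T$ are killed by the same mechanism, with no appeal to thinness needed.

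There is a second, topological gap: two essential components $\alpha_1,\alpha_2$ of $D_\gamma\cap A'_\Gamma$ need not be concentric (nested) in the disk $D_\gamma$. In the nonconcentric configuration each $\alpha_i$ bounds its own disk in $D_\gamma$ and the piece containing $\gamma$ is a pair of pants, so there is no annulus $A_D\subset D_\gamma$ cobounding with a band $B\subset A'_\Gamma$, and your swap has no competitor to build even if the barrier estimate held. The paper handles this case by an entirely different, quantitative argument: choosing $|A_\Gamma|$ small compared to the area of a fixed minimizing disk $D_\Gamma$ spanning $\Gamma$, it chains inequalities among $|D_\gamma|$, $|D^+|$, $|D_\Gamma|$, $|A_\Gamma|$, $|A'_\Gamma|$ and the disks $D_{\alpha_i}$ to deduce $|D_\Gamma|<6|A_\Gamma|$, impossible for $|A_\Gamma|$ small since $|D_\Gamma|$ is independent of that choice. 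Your mod-$2$ meridian count showing at least one essential intersection is fine (the paper gets it more simply: $\gamma$ is essential in the solid torus $T$, so $D_\gamma$ must meet $A'_\Gamma$, and inessential components are excluded by exchange--roundoff), but the uniqueness half of the lemma cannot be completed along the lines you propose without replacing the mean convex tent by an honestly area minimizing $A'_\Gamma$ and adding the nonconcentric-case estimate.
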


\begin{proof}
Given $\Gamma \in \N$, we choose an annulus neighborhood $A_\Gamma$ and a solid torus neighborhood $N_\Gamma \supset A_\Gamma$ of $\Gamma$ in
$\partial M$ and $M$, respectively. We may make the initial annulus neighborhood smaller as we proceed, but will keep denoting it by $A_\Gamma$,
abusing the notation. Note that, by \cite{MY2}, we can choose $A_\Gamma$ sufficiently small so that there is an area minimizing annulus $A$
in $M$ with boundary $\partial A_\Gamma$. If there is such a {\it properly embedded} area minimizing annulus $A$, then let $A'_\Gamma$ be $A$.
Otherwise $A_\Gamma$ is the unique area minimizing annulus with boundary $\partial A_\Gamma$, and we will now explain how to construct
$A'_\Gamma$ in this case.

Let $\Gamma^+$ and $\Gamma^-$ denote the boundary components of $A_\Gamma$, $D^{\pm}$ be area minimizing disks spanning $\Gamma^{\pm}$, and $\{
\gamma^{\pm}_n\}$ be sequences of disjoint simple closed curves in the interior of $D^{\pm}$ converging to $\Gamma^{\pm}$. Let $\widehat{M}$ be
the component of $\overline{ M \setminus (D^+ \cup D^-)}$ which contains $\Gamma$. Note that $\widehat{M}$ is mean convex as $D^{\pm}$ are
minimal, and $\gamma^{\pm}_n$ can be considered as simple closed curves in $\partial \widehat{M}$. Therefore, by choosing $A_\Gamma$
sufficiently small and $n$ sufficiently large, we can guarantee that there is an area minimizing annulus $A_n$ in $\widehat{M}$ spanning
$\gamma_n^+ \cup \gamma_n^-$. Let $A'_\Gamma$ to be the union of $A_n$ and the obvious (area minimizing) annuli in $D_{\pm}$ between
$\gamma^{\pm}_n $ and $ \Gamma^{\pm}$.

Before we proceed, we will prove that for sufficiently small $A_\Gamma$ and sufficiently large $n$, $A_n$ is an area minimizing surface not only
in $\widehat{M}$ but also in $M$. Assume that there is an annulus $A'_n \subset M$ such that $\partial A'_n = \partial A_n= \gamma^+_n \cup
\gamma^-_n$ and $|A'_n| < |A_n|$. Since $A_n$ is area minimizing in $\widehat{M}$, $A'_n$ cannot be embedded in $\widehat{M}$. Without loss of
generality, assume that $A'_n \cap (D^+ \setminus \gamma^+_n) \neq \emptyset$. Any component $\alpha$ of $A'_n \cap D^+$ has to be essential in
$A'_n$, since otherwise we would swap the disks bounded by $\alpha$ in $A'_n$ and in $D^+$ to get a contradiction, using the exchange-roundoff
trick. If a component $\alpha$ of $A'_n \cap D^+$ and $\gamma_n^+$ are concentric in $D^+$, then we get a contradiction (again by the
exchange roundoff trick) by swapping the annular regions between $\gamma^+_n$ and $\alpha$ in $D^+$ and in $A'_n$. Therefore any component
$\alpha$ of $A'_n \cap D^+$ has to be essential in $A'_n$ and nullhomotopic in $D^+ \setminus D_n$, where $D_n$ denotes the disk in $D^+$
bounded by $\gamma_n^+$. Consider the annulus $A''_n$ in $A'_n$ with $\partial A''_n = \alpha \cup \gamma^+_n$ and the disk $D_\alpha$ in $D^+$
bounded by $\alpha$. Note that the disk $D=A''_n \cup D_\alpha$ bounds $\gamma^+_n$ hence $|D_n| \leq |D| \leq |A''_n| +|D_\alpha|$. But the
facts that $D_\alpha$ is a subset of $D^+ \setminus D_n$ and the sequence $\{\gamma_n^+ =
\partial D_n \}_n$ converges to $\Gamma^+ = \partial D^+$ imply that $|D_\alpha|$ can be made arbitrarily small. Hence to get a contradiction,
all we need to do is make $A_\Gamma$ sufficiently small and $n$ sufficiently large, forcing $|A''_n| +|D_\alpha| < |D_n|$.

Now we have defined $A'_\Gamma$  regardless of whether $A_\Gamma$ bounds a properly embedded area minimizing annulus in $M$ or not. Note that
$A'_\Gamma$ is properly embedded in $M$, $\partial A'_\Gamma = \partial A_\Gamma$, and $A'_\Gamma \cup A_\Gamma$ bounds a solid torus $T$ in $M$
(at least when we choose $A_\Gamma$ small enough to make sure that $A'_\Gamma$ remains in the solid torus neighborhood $N_\Gamma$ of $\Gamma$).
Also note that $A'_\Gamma$ is either area minimizing or it is the union of three area minimizing annuli glued along $\gamma^{\pm}_n$.

In the rest of the proof, we will show that for any properly embedded area minimizing disk $D_\gamma$ spanning an essential curve $\gamma$ in
$A_\Gamma$, $D_\gamma \cap A'_\Gamma$ is a unique essential curve in $A'_\Gamma$: First of all, since $\gamma$ is essential in $A_\Gamma$, it is also
essential in the solid torus $T$ and cannot bound any surface in $T$. Therefore $D_\gamma$ has to intersect $A'_\Gamma$. Moreover, any component
$\alpha$ of $D_\gamma \cap A'_\Gamma$ has to be an essential curve in $A'_\Gamma$ since otherwise we would swap the disks bounded by $\alpha$ in
$D_\gamma$ and in $A'_\Gamma$ to get a contradiction using the exchange-roundoff trick.

Now, assume that $D_\gamma \cap A'_\Gamma$ has two components $\alpha_1$ and $\alpha_2$. These curves cannot be concentric in $D_\gamma$ since
otherwise, again by using the exchange-roundoff trick, we would get a contradiction with the area minimizing property of $D_\gamma$
after swapping the annular regions between the $\alpha_i$'s in $D_\gamma$ and in $A'_\Gamma$. We eliminate the remaining possibility of
nonconcentric $\alpha_i$'s by choosing $A_\Gamma$ with sufficiently small area compared to that of an area minimizing disk $D_\Gamma$ spanning
$\Gamma$. Let $\alpha$ be any component of $D_\gamma \cap A'_\Gamma$ and $D_\alpha$ be the disk in $D_\gamma$ bounded by $\alpha$. We have the
following inequalities by area minimizing properties of $D_\gamma$, $D_\Gamma$, $D^+$, and that of $A'_\Gamma$ (or, depending on the
construction of $A'_\Gamma$, $A_\Gamma$ and $A_n$, and the convergence of $\{\gamma^+_n\}$ to $\Gamma^+$):
$$ |D_\gamma| + |A_\Gamma| > |D^+| \ , \ |D^+|+|A_\Gamma| > |D_\gamma| \ , $$
$$ |D_\Gamma| + |A_\Gamma| > |D^+| \ , \  |D^+|+|A_\Gamma| > |D_\Gamma| \ , $$
$$ | D_\alpha| + |A'_\Gamma| > |D^+| \ , \  |A_\Gamma| \geq |A'_\Gamma| \ . $$
 It follows that

$$
|D_\gamma \setminus D_\alpha| =
|D_\gamma| - |D_\alpha| < |D^+| + |A_\Gamma| - |D_\alpha|
< |A'_\Gamma| + |A_\Gamma| \leq 2 |A_\Gamma| \ .
$$

Assuming that the components $\alpha_1$ and $\alpha_2$ of $D_\gamma \cap A'_\Gamma$ are not concentric in $D_\gamma$, we get
$$ |D_\gamma| = |(D_\gamma \setminus D_{\alpha_1}) \cup (D_\gamma \setminus D_{\alpha_2}) | <
|(D_\gamma \setminus D_{\alpha_1})| + |(D_\gamma \setminus D_{\alpha_2}) | < 4 |A_\Gamma| \ . $$
Hence
$$ | D_\gamma| > |D^+| - |A_\Gamma| > |D_\Gamma| - 2 |A_\Gamma|$$
leads to
$$ |D_\Gamma| < 6 |A_\Gamma|$$
which is impossible once we choose $|A_\Gamma|$ sufficiently small since $|D_\Gamma|$ is independent of this choice.
\end{proof}

In the following lemmas, we have an arbitrary $\Gamma \in \N$ and fixed annuli $A_\Gamma$ and $A'_\Gamma$ as in Lemma~\ref{keylem}.

\begin{lem}\label{region1}
Let $\gamma$ and $\gamma'$ be disjoint, essential simple closed curves in $A_\Gamma$, $D_1$ and $D_2$ be distinct properly embedded area
minimizing disks in $M$ bounding $\gamma$, $\alpha_i = D_i \cap A'_\Gamma$, and $R \subset A'_\Gamma$ be the annulus bounded by $\alpha_1$ and $
\alpha_2$. Then any area minimizing disk in $M$ spanning $\gamma'$ is disjoint from $R$.
\end{lem}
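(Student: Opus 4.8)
The plan is to first locate $\alpha' := D' \cap A'_\Gamma$ relative to $\alpha_1$ and $\alpha_2$, and then rule out the offending position by a homological argument inside the solid torus. First I would invoke the Meeks--Yau disjointness of Lemma~\ref{disjointdisks}: since $\gamma' = \partial D'$ is disjoint from $\gamma = \partial D_1 = \partial D_2$, any area minimizing disk $D'$ spanning $\gamma'$ is disjoint from both $D_1$ and $D_2$ (while $D_1, D_2$ meet only along $\gamma$). Hence $\alpha'$ is disjoint from $\alpha_1$ and $\alpha_2$, and by Lemma~\ref{keylem} all three are essential, therefore mutually parallel, curves in the annulus $A'_\Gamma$. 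Since $R$ is exactly the compact subannulus of $A'_\Gamma$ bounded by $\alpha_1, \alpha_2$, and $D' \cap R = \alpha' \cap R$, it suffices to show that $\alpha'$ is not contained in the interior of $R$ (disjointness from $\partial R$ then forces $\alpha' \cap R = \emptyset$).

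For the main step I would use the solid torus $T$ with $\partial T = A_\Gamma \cup A'_\Gamma$ from Lemma~\ref{keylem}, in which essential curves of $A_\Gamma$ and $A'_\Gamma$ are longitudes. Because $D_1$ and $D_2$ meet only in $\gamma$, their union $S = D_1 \cup D_2$ is an embedded sphere, and $P := S \cap T = (D_1 \cap T) \cup (D_2 \cap T)$ is a properly embedded annulus in $T$ with $\partial P = \alpha_1 \cup \alpha_2$ and $\gamma \subset P$. The key geometric claim is that $P$ together with $R$ bounds a solid torus $W \subset T$ (the closure of the component of $T \setminus P$ adjacent to $R$), that $\overline{W} \cap \partial M = \gamma$, and that the core of $R$ is a longitude of $W$, hence a generator of $H_1(\overline{W}) \cong \Z$. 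I would establish this by choosing the product structure $T \cong S^1 \times D^2$ making $A_\Gamma, A'_\Gamma$ vertical, and isotoping the incompressible annulus $P$ to the vertical annulus over an arc joining $\alpha_1$ to $\alpha_2$ through $\gamma$; then $W$ is the vertical solid torus over the disk that this arc cuts off together with the base of $R$, and everything is visible in the cross-sectional disk $D^2$.

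Then I would derive the contradiction. Suppose $\alpha' \subset \operatorname{int} R$. Since $\gamma' \subset A_\Gamma$ is disjoint from $\gamma$ and $\overline{W} \cap \partial M = \gamma$, the boundary $\partial D' = \gamma'$ is disjoint from $\overline{W}$; and since $D'$ avoids $S \supset P$, the surface $F := D' \cap \overline{W}$ is properly embedded in $\overline{W}$ with $\partial F = D' \cap R = \alpha'$. Hence $\alpha'$ bounds in $\overline{W}$, so $[\alpha'] = 0$ in $H_1(\overline{W};\Z)$, contradicting the fact that $\alpha'$, being parallel to the core of $R$, generates $H_1(\overline{W};\Z) \cong \Z$. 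Therefore $\alpha' \not\subset \operatorname{int} R$, and $D' \cap R = \emptyset$ as desired.

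I expect the main obstacle to be the geometric claim in the second step: verifying that $P \cup R$ (rather than $P$ together with the complementary annulus running through $A_\Gamma$) bounds the solid torus $W$, and that the core of $R$ is a longitude there. This amounts to correctly identifying the boundary-parallel side of the incompressible annulus $P$ and checking $\overline{W} \cap A_\Gamma = \gamma$; the cleanest route is the explicit product description above, in which the arc $\delta_1 \cup \delta_2 = (D_1 \cup D_2) \cap (S^1 \times \{\mathrm{pt}\})$ cuts a disk off $D^2$ whose vertical solid torus is $W$, so the homology computation $[\alpha'] \neq 0$ in $H_1(W)$ is immediate.
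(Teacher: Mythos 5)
Your argument is correct and is essentially the paper's own proof: both isolate the ``middle'' piece of the solid torus $T$ cut off by $D_1\cup D_2$ (your $W$, the paper's $T_0$, with boundary $P\cup R$ and meeting $\partial M$ only in $\gamma$), observe that $\gamma'=\partial D'$ therefore avoids $\overline{W}$, and combine this with the Meeks--Yau disjointness of Lemma~\ref{disjointdisks} (plus the unique essential intersection curve from Lemma~\ref{keylem}) to forbid $D'$ from meeting $R$. Your explicit product structure and $H_1(\overline{W})\cong\Z$ computation merely spell out the paper's terse final assertion that a disk meeting $R$ would be forced to intersect $D_1$ or $D_2$.
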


\begin{proof}
Observe that each of the disks $D_1$ and $D_2$ separates the solid torus $T$ with $\partial T = A_\Gamma \cup_{\Gamma^{\pm}} A'_\Gamma$ into two
pieces. Since $D_1 \cap D_2 = \gamma \subset \partial T $, $D_1 \cup_{\gamma} D_2$ separates $T$ into three pieces, and $R$ is ``half" (the annulus
$(D_1 \cup_{\gamma} D_2) \cap T$ being the other ``half") of the boundary of the ``middle" piece $T_0$. Note that $T_0 \cap A_\Gamma = \gamma$,
therefore $\gamma'$ does not intersect $T_0$. If an area minimizing disk spanning $\gamma'$ were to intersect $R$, this would force it to intersect
either $D_1$ or $D_2$, but this is impossible since properly embedded area minimizing disks with disjoint boundaries do not intersect by Lemma
\ref{disjointdisks}.
\end{proof}

\begin{lem}\label{region2}
For every simple closed curve $\gamma$ which is essential in $A_\Gamma$ and bounds a properly embedded area minimizing disk in $M$ there is a
subset $R_\gamma^\Gamma$ of $A'_\Gamma$ such that
\begin{enumerate}
\item the intersection of $A'_\Gamma$ and any area minimizing disk spanning $\gamma$ belongs to $R_\gamma^\Gamma$,
\item $R_\gamma^\Gamma$ is an annulus if $\gamma \notin \U$,
\item $R_\gamma^\Gamma$ is a simple closed curve if $\gamma \in \U$, and
\item if $\gamma$ and $\gamma'$ are disjoint, so are $R_\gamma^\Gamma$ and $R^\Gamma_{\gamma'}$.
\end{enumerate}
\end{lem}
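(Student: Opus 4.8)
The plan is to take $R_\gamma^\Gamma$ to be the smallest closed sub-annulus of $A'_\Gamma$ containing every intersection curve $\alpha_D = D \cap A'_\Gamma$ as $D$ ranges over all area minimizing disks spanning $\gamma$, where a ``sub-annulus'' is allowed to degenerate to a single essential curve. First I would record the structural facts that make this definition sensible. By Lemma~\ref{keylem} each such $D$ meets $A'_\Gamma$ in a single essential simple closed curve $\alpha_D$, and by Lemma~\ref{disjointdisks} distinct area minimizing disks spanning $\gamma$ are pairwise disjoint, so the curves $\{\alpha_D\}$ are pairwise disjoint essential curves in $A'_\Gamma$. Disjoint essential curves in an annulus are linearly ordered by the nesting of the sub-annuli they cut off from a fixed boundary component of $A'_\Gamma$; hence $\{\alpha_D\}$ lies between an ``infimum'' and a ``supremum'' curve, and I would define $R_\gamma^\Gamma$ as the closed region they bound, i.e. the closure $\overline{\bigcup_D \alpha_D}$ together with the complementary components that do not meet $\partial A'_\Gamma$. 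Claim (1) then holds by construction. If $\gamma\in\U$ there is a single disk and a single curve, so $R_\gamma^\Gamma$ is that curve, giving (3); if $\gamma\notin\U$ there are at least two distinct, disjoint curves $\alpha_{D_1}\ne\alpha_{D_2}$, so $R_\gamma^\Gamma$ contains the nondegenerate annulus between them and is itself a nondegenerate annulus, giving (2).

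The substance of the lemma is the disjointness statement (4), and here the plan is to combine Lemma~\ref{region1} with the disjointness of Lemma~\ref{disjointdisks}. Since $\gamma$ and $\gamma'$ are disjoint, every area minimizing disk spanning $\gamma$ is disjoint from every one spanning $\gamma'$, so the $\gamma$-curves $P=\{\alpha_D\}$ and the $\gamma'$-curves $Q=\{\alpha_{D'}\}$ together form one family of pairwise disjoint essential curves carrying the linear order above, and $P\cap Q=\varnothing$ because disjoint disks cannot share an intersection curve. The key step is to show that $P$ and $Q$ are \emph{separated} in this order: no $\gamma$-curve lies strictly between two $\gamma'$-curves, and symmetrically. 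Indeed, if $\alpha_D$ lay in the sub-annulus $R$ bounded by two $\gamma'$-curves $\alpha_{D'_1},\alpha_{D'_2}$, then applying Lemma~\ref{region1} with the roles of $\gamma$ and $\gamma'$ exchanged (so $D'_1,D'_2$ play the part of the two disks spanning the common curve and $D$ spans the other) forces $D$ to be disjoint from $R$, contradicting $\alpha_D\subset R$; when one of the families is a singleton the corresponding direction is vacuous but the other direction still forces the singleton curve outside the open region of the other family. Either way one obtains $\sup P\le\inf Q$ or $\sup Q\le\inf P$, so $R_\gamma^\Gamma$ and $R_{\gamma'}^\Gamma$ are nested intervals that meet, if at all, only in a common boundary curve.

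The main obstacle is upgrading this to genuine disjointness by excluding a shared boundary curve $\beta=\sup P=\inf Q$, that is, an essential curve approached by $\gamma$-curves from one side and $\gamma'$-curves from the other. To rule this out I would invoke Lemma~\ref{seq}: a sequence of $\gamma$-disks whose intersection curves converge to $\beta$ subconverges to a collection of area minimizing disks whose boundary is again $\gamma$, so $\beta$ is realized as $\alpha_D$ for an actual area minimizing disk spanning $\gamma$, i.e. $\beta\in P$; the analogous argument gives $\beta\in Q$, contradicting $P\cap Q=\varnothing$. Hence $\sup P$ and $\inf Q$ are distinct curves and $R_\gamma^\Gamma\cap R_{\gamma'}^\Gamma=\varnothing$, establishing (4). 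The only points needing care are the bookkeeping of the linear order and the verification that the limiting disks from Lemma~\ref{seq} genuinely span $\gamma$ rather than some concentric curve, which is automatic since all the approximating disks share the exact boundary $\gamma$.
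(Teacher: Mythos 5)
Your proposal is correct and follows essentially the same route as the paper: define $R_\gamma^\Gamma$ as the (possibly degenerate) smallest annulus spanned by the intersection curves, obtain (1)--(3) from Lemma~\ref{keylem} together with the pairwise disjointness of Lemma~\ref{disjointdisks}, and deduce (4) from Lemma~\ref{region1} applied in both directions. The only divergence is that, because you take the \emph{closed} annulus, you add a compactness step via Lemma~\ref{seq} to rule out a shared boundary curve --- a detail the paper's terse proof (which defines $R_\gamma^\Gamma$ as the union of annuli bounded by pairs of realized curves and simply cites Lemma~\ref{region1}) leaves implicit, so your version is, if anything, slightly more careful.
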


\begin{proof}
If $\gamma \in \U$, then the definition of $R_\gamma^\Gamma$ is obvious and (3) is a consequence of Lemma~\ref{keylem}. Assume that, $\gamma
\notin \U$, and consider all the curves obtained as the intersection of $A'_\Gamma$ with an area minimizing disk spanning $\gamma$. Let
$R_\gamma^\Gamma$ be the union of all the annuli bounded by any pair of such curves. (1) and (2) hold by definition and connectedness of
$R_\gamma^\Gamma$. (4) is a consequence of Lemma~\ref{region1}.

\end{proof}

\section{Uniqueness of absolutely area minimizing surfaces}

This section is devoted to the proof of the following theorem.

\begin{thm}\label{surface}
Suppose that $M$ is a compact, orientable, mean convex $3$-manifold. Let $\B$ be the set of simple closed curves on the boundary of $M$ which are
nullhomologous in $M$, and $\T \subset \B$ consist of those which bound a unique absolutely area minimizing surface in $M$. Then, $\T$ is not only
dense but also countable intersection of open dense subsets of $\B$ with respect to the $C^0$-topology.
\end{thm}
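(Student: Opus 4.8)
The plan is to mirror the structure of the disk case (Theorem~\ref{disk}) as closely as possible, but using the isometric embedding idea sketched in the introduction to compensate for the lack of a Meeks--Yau exchange trick for disjoint boundaries (the disjointness in Lemma~\ref{Hass} requires the surfaces to be \emph{homologous}, which fails for distinct boundary curves when $H_2(M;\Z)\neq 0$). First I would embed $M$ isometrically into a larger mean convex manifold $\wh{M}$ so that each smooth piece of $\partial M$ extends outward, exactly as permitted by Definition~\ref{meanconvex}. For a given $\Gamma\in\B$, I would pick a pushed-off copy $\wh{\Gamma}\subset\wh{M}\setminus M$ parallel to $\Gamma$, and then, invoking Theorem~\ref{absmincur}, produce an absolutely area minimizing surface $\wh{\Sigma}\subset\wh{M}$ with $\partial\wh\Sigma=\wh\Gamma$. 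The key point is that $\Gamma'=\wh\Sigma\cap\partial M$ is a simple closed curve in $\partial M$ lying $C^0$-close to $\Gamma$, and that $\wh\Sigma\cap M$ realizes the unique absolutely area minimizing surface spanning $\Gamma'$ in $M$: any competitor $S'\subset M$ with the same boundary could be glued to the outer collar $\wh\Sigma\setminus M$ to contradict minimality of $\wh\Sigma$, while uniqueness follows because two distinct minimizers for $\Gamma'$ would yield two distinct extensions of equal area, violating the regularity/uniqueness built into Lemma~\ref{Hass} applied in $\wh M$.

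Next I would establish density. Foliating a boundary collar by a family $\{\Gamma_t\}$ of pushed-off curves $\{\wh\Gamma_t\}$ in $\wh M\setminus M$ and running the construction above simultaneously, I would obtain for each $t$ a curve $\Gamma'_t\subset\partial M$ bounding a unique minimizer in $M$. Since the slicing curves $\Gamma'_t=\wh\Sigma_t\cap\partial M$ sweep across a neighborhood of $\Gamma$ as $t$ varies, every $\Gamma\in\B$ is a $C^0$-limit of uniqueness curves, giving density of $\T$ in $\B$. This is the step where the embedding trick does the essential work that the lens construction did for disks; it replaces the summation/countability argument used for density in Theorem~\ref{disk}.

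For the $G_\delta$ structure I would adapt the summation argument of \cite{Co1} as promised in the introduction. For each $\Gamma\in\B$ I would again fix a collar $A_\Gamma\subset\partial M$, foliate it by nullhomologous curves $\{\Gamma_t\}$, and use Lemma~\ref{Hass} to get a canonical region $R^\Gamma_{\Gamma_t}$ — the union of the homologically sandwiched surfaces between extreme minimizers — with the crucial disjointness $R^\Gamma_{\Gamma_s}\cap R^\Gamma_{\Gamma_t}=\emptyset$ for $s\neq t$ coming precisely from the fact that the $\Gamma_t$ are now \emph{homologous} to each other inside the collar, so Lemma~\ref{Hass} applies to separate them. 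A finite total-volume bound then forces $|R^\Gamma_{\Gamma_t}|>0$ for only countably many $t$, and defining
$$V_n=\{\gamma\in\B\mid \exists\,\Gamma\in\B,\ \gamma\text{ essential in }A_\Gamma,\ |R^\Gamma_\gamma|<1/n\}$$
gives $\T=\bigcap_n V_n$ with each $V_n$ open and dense by the same barrier argument as in Theorem~\ref{disk}: a pair of nearby uniqueness curves $\gamma^\pm$ with thin canonical regions homologically traps every minimizer of any intermediate curve.

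The main obstacle I anticipate is justifying that the intermediate curves $\Gamma_t$ in the collar are genuinely mutually homologous (so that Lemma~\ref{Hass} can be invoked to get disjointness of the $R^\Gamma_{\Gamma_t}$), and that the \emph{volumes} of these sandwiched regions — rather than areas of annuli, as in the disk case — still sum to a finite quantity bounded by the volume of a fixed neighborhood. The disk argument controlled intersection curves on a fixed annulus $A'_\Gamma$; here one must instead control codimension-one regions in a solid-torus-like neighborhood and verify that distinct sheets sweep out disjoint positive volume, which is exactly where the homological hypothesis and the embedding into $\wh M$ must be combined carefully.
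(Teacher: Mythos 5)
There is a genuine gap in each half of your argument. For density, you assert that $\Gamma'=\wh{\Sigma}\cap\partial M$ is a simple closed curve $C^0$-close to $\Gamma$, but this is precisely the hard point, not a formality: an absolutely area minimizing surface $\wh{\Sigma}$ in the larger manifold with boundary $\wh{\Gamma}\subset\wh{M}\setminus M$ can \emph{a priori} miss the interior of $M$ entirely (hugging $\partial M$ from the outside; this is exactly the failure mode described in Remark~\ref{meanconvexrem}), and even when it enters $M$ the slice may be disconnected, non-simple, or equal to $\Gamma$ itself. The paper does not run your direct construction; it argues by contradiction, assuming every curve in an $\epsilon$-neighborhood of $\Gamma$ bounds at least two minimizers, takes a double limit of minimizers $\wh{\Sigma}_i^j$ in the shrinking enlargements $M_{1/j}$, and proves the key Claim that some limit $\Sigma^j$ lies inside $M$ \emph{by using the non-uniqueness hypothesis}: if $\Sigma^j$ protruded for all $j$, the slice curve $\Gamma^j=\partial(\Sigma^j\cap M)$ would eventually lie in $N_\epsilon(\Gamma)$, bound a second minimizer, and swapping would produce a minimizer in $M_{1/j}$ singular along $\Gamma^j$, contradicting Federer's regularity theorem; the same swap applied to $\wh{\Sigma}^j_{i_0}\cap M$ then yields the final contradiction. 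Your proposal has no mechanism to force $\wh{\Sigma}$ into $M$ with a slice near $\Gamma$, and it also does not dispose of curves whose minimizers lie in $\partial M$ (Remark~\ref{bdry2}), which is handled at the start of Proposition~\ref{density}.

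For the $G_\delta$ part, your central disjointness step fails: Lemma~\ref{Hass} requires the two \emph{surfaces} to be homologous, and the fact that the foliation curves $\Gamma_t$ are isotopic (hence homologous) inside the collar does not make their spanning minimizers homologous in $M$ when $H_2(M;\Z)\neq 0$ --- they can differ by a nontrivial closed surface class, and indeed disjoint $H$-extreme curves can bound intersecting minimizers (Remark~\ref{intersection}, \cite{Co2}). This is exactly why the disk argument does not transfer, so your regions $R^\Gamma_{\Gamma_t}$ around an \emph{arbitrary} $\Gamma\in\B$ need not be pairwise disjoint and the summation collapses. You flag this as an anticipated obstacle, but the resolution is the missing idea of the paper: prove density first (Proposition~\ref{density}), and then establish homological constancy only near \emph{uniqueness} curves $\Gamma\in\T$ --- via a compactness/trapping argument showing that any minimizer with boundary close to $\Gamma$ must lie in the slab between $\Sigma$ and a fixed nearby minimizer (else a sequence of escaping minimizers $S_i$ would converge to the unique minimizer $\Sigma$ while meeting the compact surface $\Sigma^+_{N_0}$ disjoint from $\Sigma$), hence lies in a product neighborhood $N(\Sigma)$, separates it, and is homologous to $\Sigma$. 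Only on the resulting open dense set $\C=\bigcup_{\Gamma\in\T}N_\Gamma$ are the canonical regions $\Omega_\alpha=[\Sigma^-_\alpha,\Sigma^+_\alpha]$ defined and pairwise disjoint for disjoint boundaries, and the volume summation defining $U_i=\{\alpha\in\C : s_\alpha<1/i\}$ runs there; the $G_\delta$ structure thus logically depends on density together with local homological constancy at uniqueness curves, not on a collar foliation of a general nullhomologous curve.
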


\begin{rmk} \label{intersection} In order to prove this theorem, one might want to employ a similar method to the disk case. However, the crucial step in this method is Lemma
\ref{disjointdisks}, i.e. the area minimizing disks $D_1$ and $D_2$ in $M$ bounding the simple closed curves $\Gamma_1$ and $\Gamma_2$ in $\partial
M$ are disjoint provided that the curves $\Gamma_1$ and $\Gamma_2$ are disjoint, and this is not true for the absolutely area minimizing surfaces
case. There exist disjoint $H$-extreme curves which bound intersecting absolutely area minimizing surfaces  \cite{Co2}.
\end{rmk}

\begin{rmk} \label{bdry2} Like in the disk case, we will ignore the curves in $\B$ which bound absolutely area minimizing surfaces in $\partial M$ (See Remark \ref{bdry}).
\end{rmk}

\begin{prop}\label{density}  $\T$ is dense in $\B$ with respect to the $C^0$-topology.
\end{prop}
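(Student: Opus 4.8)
The plan is to express each curve we wish to approximate as the trace on $\partial M$ of an absolutely area minimizing surface living in a slightly larger manifold, and then to import uniqueness from that larger manifold back into $M$.

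\emph{Construction.} First I would enlarge $M$ to a compact, mean convex manifold $\wh{M}$ by attaching a thin collar to $\partial M$ and extending each smooth face $S$ of $\partial M$ to the surface $S'$ furnished by the third clause of Definition~\ref{meanconvex}; the collar is arranged so that $\wh{M}$ is again mean convex and $\partial M$ lies in its interior. Given $\Gamma\in\B$ and $\epsilon>0$, push $\Gamma$ a small distance into the collar $\wh{M}\setminus M$ to obtain a simple closed curve $\wh{\Gamma}$ within $\epsilon$ of $\Gamma$ in the $C^0$-topology. Since $\wh{\Gamma}$ is homologous to $\Gamma$ in $\wh{M}$, it is nullhomologous there, so Theorem~\ref{absmincur} supplies an absolutely area minimizing surface $\wh{\Sigma}\subset\wh{M}$ with $\partial\wh{\Sigma}=\wh{\Gamma}$. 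I then set $\Gamma'=\wh{\Sigma}\cap\partial M$, $\Sigma_1=\wh{\Sigma}\cap M$, and $P=\wh{\Sigma}\setminus\Sigma_1$ (the part of $\wh{\Sigma}$ lying in the collar, a surface with $\partial P=\wh{\Gamma}\cup\Gamma'$).

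\emph{Good trace.} The technical heart, and the step I expect to be the main obstacle, is to guarantee, by choosing the push-off small enough, that $\Gamma'$ is a single simple closed curve lying within $\epsilon$ of $\Gamma$ in $\partial M$. I would handle this as follows. By Remark~\ref{bdry2} we may assume $\Gamma$ does not bound an absolutely area minimizing surface inside $\partial M$, so a minimizer for $\Gamma$ dips into the interior of $M$; choosing the collar thin then forces $\wh{\Sigma}$ to meet $M$, so $\Gamma'\neq\emptyset$. Letting the push-off distance tend to $0$ and invoking the standard compactness of absolutely area minimizing surfaces (the integral-current analogue of Lemma~\ref{seq}, from the geometric measure theory underlying Theorem~\ref{absmincur}), the surfaces $\wh{\Sigma}$ converge to an absolutely area minimizing surface $\Sigma$ for $\Gamma$ in $M$. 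Because $\partial M$ is mean convex, the strong maximum principle prevents a minimal surface from being interiorly tangent to $\partial M$ from the convex side; hence $\Sigma$ meets $\partial M$ transversally and only along $\Gamma$, and every component of $\wh{\Sigma}\cap\partial M$ is a transverse crossing. For a sufficiently small push-off $\wh{\Sigma}$ is $C^1$-close to $\Sigma$, so it meets $\partial M$ in a single transverse curve $\Gamma'$ within $\epsilon$ of $\Gamma$. This $\Gamma'$ lies in $\B$ at once, since $\Gamma'=\partial\Sigma_1$ bounds the surface $\Sigma_1\subset M$.

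\emph{Uniqueness in $M$.} Finally I would show $\Gamma'\in\T$. First, $\Sigma_1$ is absolutely area minimizing in $M$: if some $\Sigma'\subset M$ with $\partial\Sigma'=\Gamma'$ had $|\Sigma'|<|\Sigma_1|$, then $P\cup\Sigma'$ would bound $\wh{\Gamma}$ with area below $|\wh{\Sigma}|$, contradicting minimality of $\wh{\Sigma}$. For uniqueness, suppose $\Sigma_2\subset M$ is another absolutely area minimizing surface with $\partial\Sigma_2=\Gamma'$; then $|\Sigma_2|=|\Sigma_1|$, so the swapped surface $\wh{\Sigma}_2:=P\cup\Sigma_2$ satisfies $\partial\wh{\Sigma}_2=\wh{\Gamma}$ and $|\wh{\Sigma}_2|=|\wh{\Sigma}|$, and is therefore itself absolutely area minimizing in $\wh{M}$. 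By the regularity theorem $\wh{\Sigma}_2$ is smooth along $\Gamma'$, which lies in the interior of $\wh{M}$; but $\wh{\Sigma}_2$ agrees with $\wh{\Sigma}$ on the open subsurface $P$, so unique continuation for minimal surfaces (alternatively the swapping-and-regularity argument of Lemma~\ref{Hass}) forces $\wh{\Sigma}_2=\wh{\Sigma}$, whence $\Sigma_2=\Sigma_1$. Thus $\Gamma'$ bounds a unique absolutely area minimizing surface in $M$, i.e. $\Gamma'\in\T$; since $\Gamma'$ is within $\epsilon$ of $\Gamma$ and $\Gamma\in\B$ and $\epsilon>0$ were arbitrary, $\T$ is dense in $\B$.
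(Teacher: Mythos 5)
Your overall strategy --- push $\Gamma$ into a collar, minimize in the enlarged manifold, and read off the trace on $\partial M$ --- is exactly the idea the paper announces in its introduction, and your ``Uniqueness in $M$'' step (cut-and-paste plus regularity/unique continuation, as in Lemma~\ref{Hass}) is sound and matches the paper's final swap. The genuine gap is in your ``Good trace'' step, and it is not a technicality: you assert that as the push-off tends to $0$ the minimizers $\wh{\Sigma}$ converge to an absolutely area minimizing surface $\Sigma$ \emph{contained in $M$}, meeting $\partial M$ only along $\Gamma$ and transversally. Compactness only yields a limit that is minimizing in $\wh{M}$ with boundary $\Gamma$; nothing forces that limit into $M$. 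Mean convexity in Definition~\ref{meanconvex} is weak (nonnegative mean curvature of piecewise smooth faces), so $\partial M$ may contain minimal faces, the extensions $S'$ beyond $M$ carry no curvature control, and a minimizer in $\wh{M}$ with boundary near $\partial M$ can genuinely leave $M$ (this is the point of Remark~\ref{meanconvexrem}). For the same reason the strong maximum principle does not give your transversality claim: tangency of $\Sigma$ with a zero-mean-curvature face forces local \emph{coincidence}, i.e.\ portions of $\Sigma$ lying inside $\partial M$, not a transverse crossing --- which is why the paper needs Remark~\ref{bdry2} and still has to argue the point. Finally, curves in $\B$ are only $C^0$, so Theorem~\ref{absmincur} provides no boundary regularity at $\Gamma$, and the claimed $C^1$-closeness of $\wh{\Sigma}$ to $\Sigma$ in the region where the trace $\Gamma'$ forms (close to both $\Gamma$ and $\partial\wh{\Sigma}$) is unjustified; even granting smooth convergence you would still need to exclude extra small components of $\wh{\Sigma}\cap\partial M$.

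The paper sidesteps exactly this obstacle by arguing by contradiction: if density fails, every simple closed curve in a fixed $C^0$-neighborhood $N_\epsilon(\Gamma)$ bounds at least two minimizers, and this hypothesis powers the whole proof. It is used (i) to rule out $\Sigma\subset\partial M$; (ii) in the Claim, to force some limit $\Sigma^j$ into $M$: if each $\Sigma^j$ escaped $M$, its trace curve $\Gamma^j\subset N_\epsilon(\Gamma)$ would bound a second minimizer $S_2$ in $M$, and swapping $S_2$ for $\overline{\Sigma^j\cap int(M)}$ yields an equal-area minimizer in $M_{1/j}$ with a crease along $\Gamma^j$, contradicting regularity; and (iii) at the end, where the traces $\Gamma_i=\wh{\Sigma}^j_i\cap\partial M$ converge to $\Gamma$, eventually lie in $N_\epsilon(\Gamma)$, bound a second minimizer, and the same swap again violates regularity. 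Within the reductio the paper never has to show directly that the trace is a single simple closed uniqueness curve; your direct version must, and that is precisely what it does not establish. Repairing your argument essentially requires re-introducing the paper's contradiction hypothesis to control where the minimizers sit relative to $M$.
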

\begin{proof}
Assume otherwise. Then there is a simple closed curve $\Gamma$ with a neighborhood $N_\epsilon(\Gamma)$ in $\partial M$ such that any simple
closed curve $\Gamma' \subset N_\epsilon(\Gamma)$, i.e. $d(\Gamma, \Gamma')<\epsilon$ in $C^0$-metric, bounds at least two absolutely area minimizing
surfaces $\Sigma_1'$ and $\Sigma_2'$ in $M$.

This implies that an absolutely area minimizing surface $\Sigma$ in $M$ with $\partial \Sigma = \Gamma$ cannot lie in $\partial M$.
Indeed, since $M$ is mean convex, by the maximum principle, $\Sigma\cap \partial M = \Gamma$. This is because if $\Sigma\subset \partial M$, then for any
simple closed curve $\alpha$ near $\Gamma$ in $\Sigma \subset \partial M$, $\alpha$ must bound a unique absolutely area minimizing surface.
Otherwise, if $\alpha$ bounds $\Sigma_1\subset \Sigma$ and another absolutely area minimizing surface $\Sigma_2$ in $M$, then
$\Sigma'=(\Sigma \setminus \Sigma_1) \cup \Sigma_2$ would be yet another absolutely area minimizing surface with boundary $\Gamma$ since $|\Sigma|=|\Sigma'|$. However,
there is a singularity along $\alpha$ in $\Sigma'$. This contradicts the regularity theorem for absolutely area minimizing surfaces \cite{Fe}.

Now, embed $M$ into a larger $3$-manifold $N$ isometrically as in Definition \ref{meanconvex}, i.e. $M$ is isometric to a codimension-$0$ submanifold
of $N$. We abuse the notation and use $M$ to denote this submanifold. For every $\delta>0$, let $M_\delta$ denote the $\delta$-neighborhood of $M$ in
$N$.

For each $j \in \Z_+$, consider a sequence of curves $\{ \wh{\Gamma}_i^j\}_{i=1}^{\infty}$ in $M_{1/j} \setminus M$ which converges to $\Gamma$ as
$i$ tends to $\infty$. For every  $i, j \in \Z_+$, let $\wh{\Sigma}_i^j$ be an absolutely area minimizing surface in $M_{1/j}$ with $\partial
\wh{\Sigma}_i^j = \wh{\Gamma}_i^j$. For each $j$, by Federer's compactness theorem \cite{Fe}, a subsequence of $\{ \wh{\Sigma}_i^j\}_i$ converges to
an absolutely area minimizing surface $\Sigma^j$ in $M_{1/j}$ with $\partial \Sigma^j = \Gamma$. As a further consequence of compactness, the
sequence $\{ \Sigma^j \}_{j=1}^{\infty}$ has a subsequence converging to an absolutely area minimizing surface $\Sigma$ in $M$ with $\partial \Sigma
= \Gamma$.

\vspace{.2cm}

\noindent {\bf Claim:} There exists $j \in \Z_+$ such that $\Sigma^j \subset M$, and hence $\Sigma^j$ is an absolutely area minimizing surface in
$M_{1/k}$ for every $k \geq j$.

\noindent {\em Proof of the Claim:} Assume that $\Sigma^j \setminus M\neq \emptyset$  for all $j$. Now, replace the sequence $\Sigma^j$ with the
sequence $\Sigma_M^j=\overline{\Sigma^j\cap int(M)}$ which also converges to $\Sigma$. Since $int(\Sigma)\cap\partial M =\emptyset$ by assumption, we
can assume that $\Sigma_M^j$ is connected by ignoring the smaller pieces if necessary. Now, consider $\Gamma^j = \partial \Sigma_M^j$ in $\partial
M$. If $\Gamma^j = \Gamma$ for infinitely many $j$, then a sequence of interior points of $\Sigma^j$'s would converge to a point in $\partial M$,
contradicting the assumption that $int(\Sigma)\cap\partial M =\emptyset$. Therefore $\Gamma^j$ is distinct from $\Gamma$ (may intersect it) for all
but finitely many $j$. On the other hand, $\Gamma^j$ converges to $\Gamma$ since $\Sigma_M^j$ converges to $\Sigma$. Hence for sufficiently large
$j$, $\Gamma^j\subset N_\epsilon(\Gamma)$ and by assumption, $\Gamma^j$ bounds at least one absolutely area minimizing surfaces $S_2$ other than
$S_1=\Sigma_M^j$ in $M$ (see Figure~\ref{figure2}). By swaping $S_1$ and $S_2$ in $\Sigma^j$, we get a new surface $\tilde{\Sigma}^j=(\Sigma^j
\setminus S_1)\cup S_2$ which has the same area as $\Sigma^j$. Hence, $\tilde{\Sigma}^j$ is an absolutely area minimizing surface in $M_{1/j}$ with
boundary $\Gamma$. However, $\tilde{\Sigma}^j$ is singular along $\Gamma^j$ which contradicts the regularity theorem for absolutely area minimizing
surfaces \cite{Fe}. This finishes the proof of the claim.

\begin{figure}[b]\label{figure2}
\begin{center}

\relabelbox {\epsfxsize=2.8in \centerline{\epsfbox{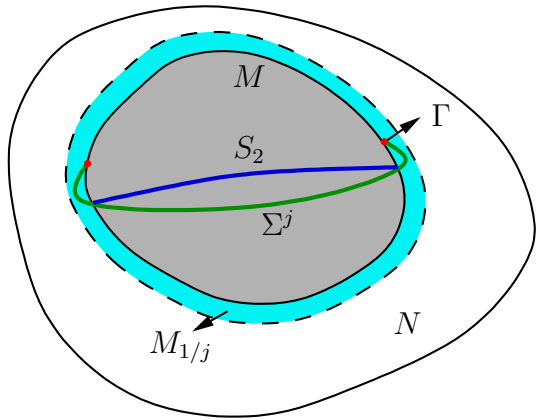}}} \relabel{1}{$N$} \relabel{2}{$M$} \relabel{3}{$M_{1/j}$} \relabel{4}{$S_2$}
\relabel{5}{$\Sigma^j$} \relabel{6}{$\Gamma$} \endrelabelbox

\end{center}

\caption{\label{fig23} \footnotesize $\Sigma^j$ is the absolutely area minimizing surface in $M_{1/j}$ with $\partial \Sigma^j=\Gamma$. $S_2$ is
another absolutely area minimizing surface with $\partial S_2 = \partial (\Sigma^j\cap M)$.}
\end{figure}

Now, to finish the proof of the proposition, we get a contradiction as follows. By using the claim above, fix a positive integer $j$ such that
$\Sigma^j \subset M$. Then $\Sigma^j$ is an absolutely area minimizing surface in $M$ with $\partial \Sigma^j = \Gamma$. Let $\Sigma_i =
\wh{\Sigma}^j_i \cap M$, and $\Gamma_i = \partial \Sigma_i = \wh{\Sigma}_i^j \cap
\partial M$. $\Sigma_i$ converges to $\Sigma^j$ and $\Gamma_i$ converges to $\Gamma$ since $\wh{\Sigma}_i^j $ converges to $\Sigma^j$ (as $i$
approaches to $\infty$). Therefore for sufficiently large $i_o$, $\Gamma_{i_o} \subset N_{\epsilon}(\Gamma)$, and consequently, $\Gamma_{i_o}$ bounds
at least one other absolutely area minimizing surface $\Sigma'_{i_o}$ in $M$ beside $\Sigma_{i_o}$ by the assumption.

Let $\tilde{\Sigma}_{i_o}^j = (\wh{\Sigma}^j_{i_o} \setminus \Sigma_{i_o}) \cup \Sigma'_{i_o}$. Since $\tilde{\Sigma}_{i_o}^j$ has the same area and
boundary as $\wh{\Sigma}_{i_o}^j$, it is also an absolutely area minimizing surface in $M_{1/j}$. However, it is singular along $\Gamma_{i_o}$
contradicting the regularity theorem for absolutely area minimizing surfaces \cite{Fe}.
\end{proof}

\begin{rmk} \label{meanconvexrem} The mean convexity of $M$ is very crucial in the proof above. If $M$ was not mean convex, then it is easy to construct
examples where for any $j \in \Z_+$, the absolutely area minimizing surface $\Sigma^j \subset M_{\frac{1}{j}}$ with $\partial \Sigma_j = \Gamma$, and
$\Sigma_j\subset M_{\frac{1}{j}}-M$. One can simply take a $3$-manifold $M$ which is not mean convex, and the absolutely area minimizing surface
$\Sigma$ with boundary $\Gamma\subset \partial M$ completely lies in $\partial M$, i.e. $\Sigma\subset \partial M$. Then, for such a manifold
$\wh{\Sigma}_i^j \cap M$ might be empty for any $i$, and the whole argument collapses (See also Remark \ref{bdry2}).
\end{rmk}

\begin{prop}\label{generic}  $\T$ is a countable intersection of open dense subsets of $\B$ with respect to the $C^0$-topology.
\end{prop}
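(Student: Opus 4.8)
The plan is to reproduce the genericity half of the proof of Theorem~\ref{disk}, with the \emph{volume} of a region in $M$ playing the role that the \emph{area} of the region $R^\Gamma_\gamma\subset A'_\Gamma$ played in the disk case, and with Lemma~\ref{Hass} (together with the embedding $M\subset N$ of Proposition~\ref{density}) supplying the disjointness that Lemma~\ref{disjointdisks} supplied there. Since Proposition~\ref{density} already gives density of $\T$, the only remaining content is to exhibit open dense sets $V_n$ with $\T=\bigcap_n V_n$: because $\T\subset V_n$ for every $n$ and $\T$ is dense, each $V_n$ is automatically dense, so the whole burden falls on \emph{openness}.

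First I would attach to each non-uniqueness curve a genuine three-dimensional region. Fix $\Gamma\in\B$ (discarding the curves bounding surfaces inside $\partial M$, as in Remark~\ref{bdry2}), an annular neighborhood $A_\Gamma$ of $\Gamma$ in $\partial M$, and a foliation $\{\Gamma_t:t\in[-\epsilon,\epsilon]\}$ of $A_\Gamma$ by essential parallel copies with $\Gamma_0=\Gamma$. For a curve $\gamma$ essential in $A_\Gamma$ I would let $R_\gamma\subset M$ be the region enclosed between the innermost and outermost absolutely area minimizing surfaces bounding $\gamma$, so that $\mathrm{vol}(R_\gamma)>0$ precisely when $\gamma\notin\T$. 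I would then set $V_n=\{\gamma\in\B:\ \exists\,\Gamma\in\B\text{ with }\gamma\text{ essential in }A_\Gamma\text{ and }\mathrm{vol}(R_\gamma)<1/n\}$, so that $\T=\bigcap_n V_n$.

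The heart of the argument, and the step I expect to be hardest, is disjointness: I would claim that the regions $R_{\Gamma_t}$ are pairwise disjoint along the foliation, whence $\sum_t\mathrm{vol}(R_{\Gamma_t})\le\mathrm{vol}(M)<\infty$ and only countably many $t$ can fail to be uniqueness curves. In the disk case this disjointness was free from Lemma~\ref{disjointdisks}, but here it is exactly what breaks down: by Remark~\ref{intersection} (see \cite{Co2}) disjoint curves may bound \emph{intersecting} absolutely area minimizing surfaces, so a naive adaptation collapses, and even the well-definedness of $R_\gamma$ via innermost/outermost surfaces is in question. To recover it I would pass to the enlarged manifold $M_\delta$ of Proposition~\ref{density}: pushing the $\Gamma_t$ off $\partial M$ to disjoint curves $\wh\Gamma_t$ in $M_\delta\setminus M$ and taking absolutely area minimizing surfaces $\wh\Sigma_t$ there, the surfaces $\Sigma_t=\wh\Sigma_t\cap M$ can be arranged to be homologous with disjoint boundaries, so that Lemma~\ref{Hass} forces them to be disjoint and hence nested along the foliation. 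Combined with the subsurface principle that a separating curve in an absolutely area minimizing surface bounds a \emph{unique} one — the very fact that makes each $\wh\Sigma_t\cap\partial M$ a uniqueness curve in Proposition~\ref{density} — this nesting confines the minimizers of the intermediate curves and yields the disjointness of the $R_{\Gamma_t}$.

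Finally I would prove openness of $V_n$ exactly as for $U_n$ in Theorem~\ref{disk}. Given $\gamma\in V_n$ with $\mathrm{vol}(R_\gamma)<1/n$, density (Proposition~\ref{density}) provides uniqueness curves $\gamma^-$ and $\gamma^+$ on the two sides of $\gamma$ whose unique absolutely area minimizing surfaces $\Sigma^-,\Sigma^+$ are disjoint — again via the embedding and Lemma~\ref{Hass} — and cobound a region of volume $<1/n$ enclosing $R_\gamma$. Any curve essential in the annulus of $A_\Gamma$ bounded by $\gamma^{\pm}$ then bounds only surfaces trapped inside this region, so its own non-uniqueness region has volume $<1/n$; thus a whole $C^0$-neighborhood of $\gamma$ lies in $V_n$, proving $V_n$ open and finishing the proof.
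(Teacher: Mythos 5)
Your overall architecture (volume-based regions, a summation argument, openness via fences on either side) matches the paper's, and you correctly locate the crux in the failure of disjointness (Remark~\ref{intersection}). But your mechanism for recovering disjointness has a genuine gap: the enlarged-manifold trick is the \emph{density} tool, and it does not deliver what this step needs. The surfaces $\Sigma_t=\wh{\Sigma}_t\cap M$ have boundaries $\wh{\Sigma}_t\cap\partial M$, which are new curves close to, but in general different from, your foliation curves $\Gamma_t$; the subsurface/swapping principle shows these \emph{new} boundary curves are uniqueness curves (that is exactly the content of Proposition~\ref{density}), but it says nothing about the other minimizers of the original curves $\Gamma_t$. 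To confine a minimizer $S$ of an intermediate curve between two such fences you must know $S$ cannot cross them, and Lemma~\ref{Hass} only forbids crossing when $S$ is \emph{homologous} to the fences --- which is precisely what the confinement was supposed to establish. The same circularity undermines the well-definedness of $R_\gamma$: ``innermost/outermost'' presupposes that all minimizers of the single curve $\gamma$ are pairwise disjoint, i.e.\ pairwise homologous, which is exactly the point in question when $H_2(M;\Z)\neq 0$. Your phrase ``can be arranged to be homologous with disjoint boundaries'' is where the proof actually lives, and nothing in the proposal supplies it.

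The paper breaks this circle with a compactness argument you are missing, and it runs the summation only over a smaller open dense set, not over all of $\B$. At a uniqueness curve $\Gamma\in\T$ with unique surface $\Sigma$, it takes uniqueness curves $\Gamma^+_i\to\Gamma$ (available by density) with surfaces $\Sigma^+_i\to\Sigma$; once $\Sigma^+_i$ lies in a tubular neighborhood $N(\Sigma)\simeq\Sigma\times(-1,1)$ with $\Gamma^+_i$ isotopic to $\Gamma$ in $\partial N(\Sigma)$, the surface $\Sigma^+_i$ separates $N(\Sigma)$, hence is homologous to $\Sigma$, hence disjoint from it by Lemma~\ref{Hass}. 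Then a limit argument pins everything down: if minimizers $S_i$ with $\partial S_i\subset A^+_i$ escaped $M^+_i=[\Sigma,\Sigma^+_i]$, they would have to meet the fixed compact surface $\Sigma^+_{N_0}$, while \emph{uniqueness of $\Sigma$} forces $S_i\to\Sigma$, which is disjoint from $\Sigma^+_{N_0}$ --- a contradiction. This yields an annular neighborhood $A_\Gamma$ in which \emph{every} minimizer lies in one homology class, restoring all the disjointness statements at once; note that the uniqueness of $\Gamma$'s surface is what anchors the limit, so a foliation based at an arbitrary $\Gamma\in\B$, as in your setup, has no such anchor. Only on $\C=\bigcup_{\Gamma\in\T}N_\Gamma$ are the canonical regions $\Omega_\alpha=[\Sigma^-_\alpha,\Sigma^+_\alpha]$ well defined and disjoint for disjoint boundaries, and the sets $U_i=\{\alpha\in\C \mid \mathrm{vol}(\Omega_\alpha)<1/i\}$ are then open dense with $\T=\bigcap_i U_i$, following \cite{Co1}. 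Your closing openness step is fine in outline, but it, too, silently invokes the homology constancy that the missing claim provides.
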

\begin{proof} Let $\Gamma\in \T$ be a uniqueness curve, i.e. $\Gamma\subset \partial M$ bounds a unique absolutely area minimizing surface $\Sigma$ in $M$. Let
$\{\Gamma^+_i\}$ be a sequence of pairwise disjoint simple closed curves in $\T$ which converges to $\Gamma$. We also assume that every $\Gamma^+_i$ is on
the same (say positive) side of $\Gamma$, i.e. $A^+_i\subset A^+_j$ when $i>j$, where $A^+_i=[\Gamma,\Gamma^+_i]$ is the annulus component of
$\partial M \setminus (\Gamma \cup \Gamma^+_i)$ for any $i$.

For each $i$, there exists a unique absolutely area minimizing surface $\Sigma^+_i$ in $M$ with
$\partial \Sigma^+_i = \Gamma^+_i$. By compactness theorem, a subsequence of $\{\Sigma^+_i\}$, which will also be denoted by $\{\Sigma^+_i\}$ by
abusing the notation, converges to $\Sigma$ which is the unique absolutely area minimizing surface
in $M$ with  boundary $\Gamma$.

Take a tubular neighborhood $N(\Sigma) \simeq \Sigma \times (-1,1)$ of $\Sigma$ in $M$. Since $\Sigma^+_i$ converges to $\Sigma$, there exists an
$N_0$ such that for any $i\geq N_0$, $\Sigma^+_i\subset N(\Sigma)$ and $\Gamma^+_i$ is isotopic to $\Gamma$ in $\partial N(\Sigma)$. Unlike the disk
case, {\em a priori} we do not know that$\Sigma^+_i\cap\Sigma = \emptyset$ even when $\Gamma^+_i\cap\Gamma = \emptyset$ (see Remark
\ref{intersection}). However, since $\Gamma^+_i$ separates the annulus $\partial N(\Sigma)$ for $i \geq N_0$, $\Sigma^+_i$ separates the product
neighborhood $N(\Sigma)$. Therefore, for $i\geq N_0$, $\Sigma^+_i$ is in the same homology class as $\Sigma$, and consequently, by Lemma \ref{Hass},
$\Sigma^+_i$ and $\Sigma$ are disjoint (See Figure \ref{fig45} left). Let us denote the component of $M \setminus (\Sigma \cup \Sigma_i^+)$ whose
boundary contains $A_i^+$ by $M_i^+ = [\Sigma,\Sigma^+_i]$.

\vspace{.2cm}

\noindent {\bf Claim:} There exists $N_1 \geq N_0$ such that for $i>N_1$, any absolutely area minimizing surface $S$ whose boundary is $C^0$-close
and isotopic to $\Gamma$ in $A^+_i $  is contained in $M^+_i$. Consequently,  $S$ is in the same homology class with $\Sigma$, by the arguments above
(See Figure \ref{fig45} right).

\noindent {\em Proof of the Claim:} Assume otherwise, i.e.,  for any $i>N_0$, we can find a sequence of absolutely area minimizing surfaces
$S_i$ in $M$ with $\partial S_i\subset A^+_i$ and $S_i \nsubseteq M^+_i$. If $S_i$ and  $\Sigma_{N_0}^+$ are disjoint, then $\Sigma$ separates
$S_i$ since $\Sigma^+_{N_0}\cup\Sigma$ separates $M$, but by using the swapping argument above, we get a new absolutely area minimizing surface
$S_i'$ with singularity along $S_i\cap\Sigma$ contradicting regularity. The assumption that $S_i$ is disjoint from $\Sigma$ leads to a similar
contradiction. Therefore we have a sequence of absolutely area minimizing surfaces $S_i $ in $M$ such that for every $i \geq N_0$, $S_i$
intersects both $\Sigma$ and $\Sigma^+_i$.

\begin{figure}[t]\label{fig45}
\begin{center}
$\begin{array}{c@{\hspace{.4in}}c}

\relabelbox {\epsfxsize=2.6in {\epsfbox{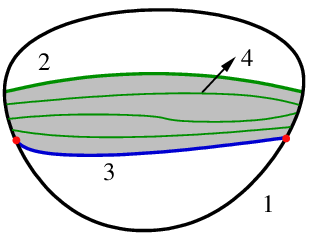}}} \relabel{1}{$M$} \relabel{2}{$\Sigma^+_{N_0}$} \relabel{3}{$\Sigma$} \relabel{4}{$\Sigma_i^+$}
\endrelabelbox  &

\relabelbox  {\epsfxsize=2.8in \epsfbox{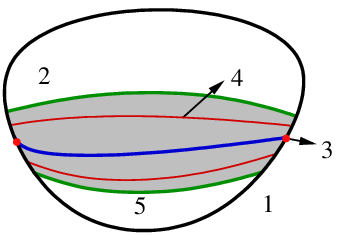}} \relabel{1}{$M$} \relabel{2}{$\Sigma^+_{N_1^+}$} \relabel{3}{$\Gamma$}
\relabel{4}{$S$} \relabel{5}{$\Sigma^-_{N_1^-}$} \endrelabelbox \\
\end{array}$

\end{center}
\caption{\label{fig45} \footnotesize On the left: for $i\geq N_0$, $\Sigma^+_i$ is in the same homology class as $\Sigma$, and consequently,
$\Sigma^+_i$ and $\Sigma$ are disjoint. On the right: for any absolutely area minimizing surface $S$ with $\partial S\subset
[\Gamma_{N_1^-},\Gamma_{N_1^+}]$ is in the same homology class with $\Sigma$, and hence any such $S$ and $S'$ are disjoint whenever $\partial
S\cap\partial S' =\emptyset$. }
\end{figure}

Since $\partial S_i$ converges to $\Gamma$, and $\Gamma$ is a uniqueness curve, by compactness theorem, after passing to a subsequence if necessary,
$S_i$ converges to $\Sigma$. However, since $S_i\cap\Sigma^+_{N_0}\neq \emptyset$ for any $i>N_0$, and $\Sigma^+_{N_0}$ is compact, the limit of the
sequence $ \{ S_i\}$ must have a limit point on $\Sigma^+_{N_0}$. Since $\Sigma^+_{N_0}\cap \Sigma=\emptyset$, this is a contradiction. The claim
follows.

Obviously, a similar statement holds for the ``negative side" of $\Gamma$. Therefore, every uniqueness curve $\Gamma$ in $\T$,
has a tubular neighborhood $A_\Gamma$ in $\partial M$ such that all absolutely area minimizing surfaces in $M$ with boundary isotopic to $\Gamma$ in $A_\Gamma$ are in the same homology class. In particular, any two distinct absolutely area minimizing surfaces with the same boundary in $A_\Gamma$ are disjoint by Lemma \ref{Hass}. Similarly, any two absolutely area minimizing surfaces with disjoint boundaries in $A_\Gamma$ are also disjoint.

Now, we will show that $\T$ is countable intersection of open dense subsets. We will follow the arguments in the main theorem of \cite{Co1}. Above,
we showed that for any simple closed curve $\Gamma$ in $\T$, there is a neighborhood $N_\Gamma$ (corresponding to the curves isotopic to $\Gamma$ in
$A_\Gamma$ above) in $C^0$ topology such that for any $\Gamma' \in N_\Gamma$, an absolutely area minimizing surface $S$ with $\partial S=\Gamma'$ is
in the same homology class with $\Sigma$, where $\Sigma$ is the unique absolutely area minimizing surface in $M$ with $\partial \Sigma=\Gamma$. This
implies that any two absolutely area minimizing surfaces with disjoint or matching boundaries in $N_\Gamma$ must be disjoint. Now, let $\C=
\bigcup_{\Gamma\in\T} N_\Gamma$. As $\T$ is dense in $\B$ by Proposition~\ref{density}, $\C$ is open dense in $\B$.

The rest of the proof is along the same lines as the proof of Theorem 3.2 in \cite{Co1}, more precisely the part regarding Claim 2. Here we give an
outline and refer the reader to \cite{Co1} for further details. For each $\alpha\in\C$, we can construct a canonical neighborhood $\Omega_\alpha
=[\Sigma^-_\alpha, \Sigma^+_\alpha]$, (the region between ``extremal" absolutely area minimizing surfaces $\Sigma^-_\alpha$ and $\Sigma^+_\alpha$
with $\partial \Sigma^\pm_\alpha =\alpha$) which contains every absolutely area minimizing surface in $M$ with boundary $\alpha$. By construction,
$\Omega_\alpha$ is independent of $N_\Gamma$ and depends only on $\alpha$. By the disjointness of absolutely area minimizing surfaces with boundary
in $\C$, if $\alpha\cap\beta=\emptyset$, then $\Omega_\alpha\cap\Omega_\beta=\emptyset$. Also, if $\alpha$ is a uniqueness curve, then
$\Sigma^+_\alpha=\Sigma^-_\alpha$ and $\Omega_\alpha=\Sigma_\alpha^{\pm}$ should be considered as a {\em degenerate} region (with no thickness).

Let $s_\alpha$ be the volume of $\Omega_\alpha$ and define $U_i=\{\ \alpha\in \C \ | \ s_\alpha<\frac{1}{i} \ \}$ for each $i \in \Z_+$. Note that $\T$ is contained in every $U_i$  since $s_\alpha = 0$ for every $\alpha\in\T$, by definition. In particular, $U_i$ is dense in $\B$. Moreover, $\T=\bigcap_{i=1}^{\infty} U_i$, by construction. Finally, by using the arguments similar to those in the proof of Theorem 3.2 in \cite{Co1}, one can prove that $U_i$ is open in $\C$, hence in $\B$.
\end{proof}

\begin{rmk} Notice that in the proof of Proposition~\ref{generic}, we show that for any simple closed curve $\Gamma\in \T$, there exists an annular neighborhood
$A_\Gamma$ of $\Gamma$ in $\partial M$, such that any absolutely area minimizing surface with boundary in $A_\Gamma$ must be in the same homology
class as the unique absolutely area minimizing surface with boundary $\Gamma$ (See Figure \ref{fig45} right). This is interesting in its own right,
and shows local constancy of the homology classes of absolutely area minimizing surfaces in some sense.

\end{rmk}

\section{Further remarks}

The density and genericity results in Section $3$ and $4$ are about $C^0$ simple closed curves in $\partial M$ with $C^0$-topology. Note that the
arguments in these results easily generalize to the smooth case. In particular, let $\N^k$ be the set of $C^k$ simple closed curves in $\partial M$ which are nullhomotopic in $M$. Then Theorem~\ref{disk}  generalizes to $\U^k=\U\cap \N^k$ in
$C^0$-topology. Moreover, this implies that if $\partial M$ smooth, then $\U^\infty$ is dense in $\N$ in $C^0$-topology. In other
words, when $\partial M$ is smooth, then for any $C^0$ nullhomotopic simple closed curve $\Gamma$ in $\partial M$, there exists a $C^\infty$
simple closed curve $\Gamma^{\infty}$  which is close to $\Gamma$ in $C^0$-topology such that $\Gamma^{\infty}$ bounds a unique area minimizing disk  in $M$.
Similar results holds for the absolutely area minimizing surface case, too. It might be interesting to study these questions in $C^k$-topology.

We should note that the generic uniqueness results in \cite{Wh1} are not directly related with our results. In \cite{Wh1}, for a fixed
$(m-1)$-manifold $X$, White shows that a generic element in $C^{j,\alpha}$ embeddings of $X$ into $\BR^n$ bounds a unique absolutely area
minimizing $m$-manifold in $\BR^n$ (\cite{Wh1}, Theorem 7). In particular, this result implies that a generic $C^{j,\alpha}$ simple closed
curve in $\BR^3$ bounds a unique absolutely area minimizing surface \cite{Mo}. White's result also generalizes to closed manifolds of any dimension
(see Section $8$ in \cite{Wh1}). However, it does not generalize to the manifolds with boundary (see the remarks in Section $8$ in \cite{Wh1}).
Hence, although it implies generic uniqueness for the curves in the interior of the manifold, it does not imply even the existence of a uniqueness curve
in $\partial M$. In this sense, White's results are not directly related with the results in this paper. On the other hand, it might be interesting
to generalize White's techniques to the manifolds with boundary, and hence to solve the generic uniqueness question in the smooth category mentioned
above.

\section*{Acknowledgements}

The authors would like to thank Joel Hass, Antonio Ros, and Theodora Bourni for very useful conversations. The second author gratefully acknowledges
the support of the Max Planck Institute in Bonn where part of this paper was written.

\end{document}